\theoremstyle{plain}
\newtheorem{lemma}{Lemma}
\newtheorem{theorem}{Theorem}
\theoremstyle{definition} 
\newtheorem{example}{Example}
\newtheorem{remark}{Remark}
\newcommand{\blind}{0}
\newcommand{\mylabel}[2]{#2\def\@currentlabel{#2}\label{#1}}
\begin{document}

\def\spacingset#1{\renewcommand{\baselinestretch}%
{#1}\small\normalsize} \spacingset{1}


\if0\blind
{
      \title{\bf
       Posterior contraction in group sparse logit models for categorical responses
  }
\author{Seonghyun Jeong
}
\affil{
Department of Statistics and Data Science,  Department of Applied Statistics\\
 Yonsei University, Seoul, Korea \smallskip \\ \texttt{sjeong@yonsei.ac.kr}}
  \maketitle
} \fi

\if1\blind
{
  \bigskip
  \bigskip
  \bigskip
  \begin{center}
    {\LARGE\bf Title}
\end{center}
  \medskip
} \fi

\begin{abstract}
This paper studies posterior contraction rates in multi-category logit models with priors incorporating group sparse structures. 
We consider a general class of logit models that includes the well-known multinomial logit models as a special case. Group sparsity is useful when predictor variables are naturally clustered and particularly useful for variable selection in the multinomial logit models.
We provide a unified platform for posterior contraction rates of group-sparse logit models that include binary logistic regression under individual sparsity.
No size restriction is directly imposed on the true signal in this study. In addition to establishing the first-ever contraction properties for multi-category logit models under group sparsity, this work also refines recent findings on the Bayesian theory of binary logistic regression.
\end{abstract}

\noindent {\it Keywords: 		Bayesian inference;
	High-dimensional regression;
	Logistic regression;
	Multinomial logit models;
	Posterior concentration rates.}

\spacingset{1.0}

\section{Introduction}

The theory of high-dimensional sparse regression has recently received a great deal of attention in the Bayesian community.
Most existing studies on Bayesian sparse regression have examined continuous response variables \citep[e.g.,][]{castillo2015bayesian,martin2017empirical,gao2015general,belitser2017empirical,jeong2020unified}.
However, discrete response variables are also very useful and essential in many areas of application; thus, they deserve far more attention than they have received.
In particular, the theory of Bayesian high-dimensional regression for multi-categorical (nominal) responses has not yet been investigated in the literature.

In this paper, we aim to fill this gap by considering high-dimensional logit models for categorical responses under group sparsity.
For every $i=1,\dots,n$, with the sample size $n$, let the response variable be $Z_i\in\{0,1,\dots,m-1\}$, where $m\ge2$ represents the number of categories.
Let $d$ be the total number of parameters, $X_i\in\mathbb{R}^{(m-1)\times d}$ be a design matrix for the $i$th observation, and $\beta\in\mathbb{R}^{d}$ be a vector of regression coefficients.
We can then write a general logit model for the categorical response $Z_i$ as
\begin{align}
\log \frac{\mathbb{P}(Z_i=\ell)}{\mathbb{P}(Z_i=0)} = X_{i(\ell)}^T\beta, \quad \ell=1,\dots, m-1, \quad i=1,\dots,n,
\label{c3eqn:model}
\end{align}
where $X_{i(\ell)}\in\mathbb R^{d}$ is the $\ell$th row of $X_i$ and $\mathbb{P}$ is the probability operator. The covariate vector $X_{i(\ell)}$ quantifies characteristics of category $\ell$ against the reference category $0$.
It is obvious that the model subsumes logistic regression models for binary response variables. More precisely, model \eqref{c3eqn:model} is reduced to a standard logistic regression model when $m=2$.
Form \eqref{c3eqn:model} is general in the sense that the covariates can vary with $\ell$, but it is often assumed that these covariates are not category-specific. We present the following two examples to elaborate upon this point.

\begin{example}[Variable selection in multinomial logit models]
The right-hand side of \eqref{c3eqn:model} often has the simpler form  $Z_i^T\alpha_\ell$ for some covariates $Z_i\in\mathbb{R}^p$ and parameters $\alpha_\ell\in\mathbb{R}^p$ with $p>0$, in which case the resulting regression model is called a multinomial logit model.
For this model, the covariate $Z_i$ for the $i$th individual is not choice-specific but rather common to all categories, and the likelihoods of the categories are discriminated by the choice-specific parameters $\alpha_\ell$. Common examples for $Z_i$ are intrinsic characteristics of individuals, such as age and gender.
The model can still be put in the general form of \eqref{c3eqn:model} by writing
	$X_i= I_{m-1} \otimes  Z_i^T\in\mathbb{R}^{(m-1)\times p(m-1)} $, $\beta=(\alpha_1^T,\dots,\alpha_{m-1}^T)^T \in\mathbb{R}^{p(m-1)}$, and $d=p(m-1)$, where $\otimes$ denotes the Kronecker product and $I_r$ is the $r\times r$ identity matrix.
	Suppose that we are interested in variable selection for $Z_i$ in the high-dimensional scenarios where sparsity is necessarily incorporated for sensible estimation.
	In this situation, it makes sense for the parameters that are linked to the same covariate to be included or excluded together. This task can be handled by group-level sparsity.
	\label{c3exm:mul}
\end{example}

\begin{example}[Group selection in conditional logit models]
	The general logit model in \eqref{c3eqn:model} is often called a conditional logit model 
	\citep{mcfadden1973conditional}. 
	For this general framework, the covariate $X_{i(\ell)}\in\mathbb R^{d}$ is choice-specific because it calibrates characteristics of category $\ell$ for individual $i$ against the reference category $0$.
	The model is particularly useful in many observational studies and decision sciences where choice-specific data are available. 
	For example, in the analysis of the remarriage and welfare choices of divorced or separated
	women \citep{hoffman1988multinomial}, for the three response categories	(remarriage, remaining single and receiving welfare, remaining single without receiving welfare), the after-tax wage rate and the non-labor income of a woman are different across the categories, meaning that these are choice-specific covariates.
	For the high-dimensional conditional logit models, individual-level sparsity is a natural treatment, but group sparsity may still be of interest, depending on the data and research questions, especially when predictor variables are naturally clustered, as is the case in gene expression data \citep{meier2008group}. 
	\label{c3exm:con}
\end{example}

In view of Example~\ref{c3exm:mul}, group sparse modeling is extremely useful for variable selection in the multinomial logit models. 
However, Example~\ref{c3exm:con} suggests that a specific treatment of the multinomial logit models may not be sufficient and indicates that  considering the general framework itself in \eqref{c3eqn:model} could be highly beneficial.
 We refer the reader to \citet{hoffman1988multinomial} for further discussion on the multinomial and conditional logit models.

We study the posterior contraction rates of model \eqref{c3eqn:model} under group sparsity, possibly with unequal group sizes.  We are primarily interested in the high-dimensional setting for which $p>n$, where $p$ is the number of groups. Clearly, $p\le d$. Note that $p=d$ if sparsity is imposed at the individual level only.
Using a lasso-type penalty, the idea of group sparse estimation was first considered for linear models in \citet{yuan2006model} and extended to logistic regression in \citet{meier2008group}.
A group lasso for multinomial logit models was considered in \citet{vincent2014sparse}.
However, even when taking the frequentist perspective,
theoretical studies on high-dimensional group sparse estimation are mostly directed at linear models 
\citep{nardi2008asymptotic,huang2010benefit,lounici2011oracle},
and few extensions have been attempted; see \citet{blazere2014oracle} for some findings for the generalized linear model setting.
Within the Bayesian framework, the estimation properties for group sparse modeling have only recently been studied, even in the case of linear regression \citep{ning2018bayesian,bai2019spike,gao2015general}.
To the best of our knowledge, the estimation properties for model \eqref{c3eqn:model} with group sparsity have not been examined previously, not even in the frequentist literature.

Although model \eqref{c3eqn:model} has not been scrutinized under group sparsity conditions, some Bayesian works on binary logistic regression, which is subsumed by our setup, do exist. Under the high-dimensional generalized linear model framework, \citet{jiang2007bayesian} established contraction rates relative to the Hellinger metric with sparsity-inducing priors. More recently, \citet{jeong2020posterior} obtained $\ell_q$-type posterior contraction results directly on regression coefficients under relaxed assumptions. \citet{wei2019contraction} examined posterior contraction in logistic regression using continuous shrinkage priors. 
Model selection consistency of high-dimensional logistic regression was considered by \citet{narisetty2019skinny} under individual sparsity and by \citet{lee2020bayesian} under group sparsity, respectively.
All these works, however, require some size restrictions on the true regression coefficients. Such a requirement is often undesirable in high-dimensional scenarios \citep{castillo2015bayesian}. 
To the best of our knowledge, \citet{atchade2017contraction} is the only available Bayesian work that makes no direct restriction on size. He obtained a lasso-type $\ell_2$-contraction rate in high-dimensional logistic regression under certain compatibility conditions. However, we find that his results can be refined under our framework, as will be seen in Section~\ref{c3sec:rates}. As such, this study improves the findings of \citet{atchade2017contraction} and goes beyond it by studying posterior contraction for model \eqref{c3eqn:model} under group sparsity without any direct size restrictions on the coefficients.

The rest of this paper is organized as follows. Section~\ref{c3sec:setup} describes the notation and specifies the prior distribution. Section~\ref{c3sec:rates} provides our main results on the posterior contraction rates of high-dimensional logit models under group sparsity. The technical proofs are provided in Section~\ref{sec:mainproof}. Lastly, Section~\ref{sec:disc} concludes with a discussion. Auxiliary results are presented in Appendix.

\section{Setup and prior specification}
\label{c3sec:setup}

\subsection{Notation}
\label{c3sec:not}
For sequences $a_n$ and $b_n$, 
$a_n\lesssim b_n$ (or $b_n\gtrsim a_n$) means that $a_n\le C b_n$ for some constant $C>0$ independent of $n$, and $a_n\asymp b_n$ means that $a_n\lesssim b_n\lesssim a_n$. 
The entire design matrix is denoted by  $X=(X_1^T,\dots,X_n^T)^T\in\mathbb{R}^{n(m-1)\times d}$.
We assume that the group subsets
$G_1,\dots,G_p$ form a partition of $\{1,\dots,d\}$ in such a manner that $\cup_{j=1}^p G_j =\{1,\dots,d\}$, allowing them to represent which variable is included in which group. We let $g_j$ represent the cardinality of $G_j$, i.e., $g_j=|G_j|$, and write $\overline g = \max_{1\le j\le p} g_j$.
For each $j=1,\dots,p$, let $\beta_j\in\mathbb{R}^{g_j}$ be the subvector of $\beta\in\mathbb{R}^{d}$ whose elements are chosen by $G_j$. Similarly, we define $X_{\cdot j}\in\mathbb{R}^{n(m-1)\times g_j}$, $j=1,\dots,p$, to be submatrices of $X\in\mathbb{R}^{n(m-1)\times d}$, where the columns of $X_{\cdot j}$ are chosen by $G_j$. Let $\beta_0$ denote the true value of $\beta$, from which the observations are generated.

For a vector $\beta\in\mathbb R^d$ and a set $S\subset\{1,\dots,p\}$ of group indices, we write $\beta_S =\{\beta_j , j \in S \}$ and $\beta_{S^c} =\{\beta_j , j \notin S \}$ to separate $\beta$ into zero and nonzero coefficients using $S$.
We also denote by $S_\beta=\{j:\beta_j\ne 0_{g_j}\} \subset \{1,\dots ,p \}$ the effective group index determined by $\beta$, where $0_{g_j}$ is the $g_j$-dimensional zero vector. The cardinalities of $S$ and $S_\beta$ are denoted by $s = |S|$ and $s_\beta = |S_\beta|$, respectively.
In particular, the group index of the true parameter $\beta_0$ and its cardinality are written as $S_0$ and $s_0$, respectively. 
We let $d_S = \sum_{j\in S} g_j$ denote the dimension of $\beta_S$, and write $d_0 = d_{S_0}$ for the true dimension. 

Let $\lVert\cdot\rVert_2$ denote the $\ell_2$-norm of a vector. For a $d$-dimensional vector $\beta$, we write $\lVert \beta\rVert_{2,1}=\sum_{j=1}^p\lVert \beta_j\rVert_2$ to denote the $\ell_{2,1}$-norm that is typically used in the context of group sparsity.
Although not specified, one can easily see that $\lVert\cdot\rVert_{2,1}$ depends on the group subsets $G_1,\dots,G_p$.
 Slightly abusing notation, we also write $\lVert \beta_S\rVert_{2,1}=\sum_{j\in S}\lVert \beta_j\rVert_2$, which depends only on $G_j$, $j\in S$.
For a matrix $X$ with $d$ columns, we define the matrix norm: 
\begin{align*}
\lVert X\rVert_\ast=\max_{1\le j\le p} \lVert X_{\cdot j}\rVert_{\rm sp},
\end{align*}
where $\lVert \cdot \rVert_{\rm sp}$ is the spectral norm of the matrix.
This expression is a natural generalization of the norm, which is the square root of the maximum diagonal entry of $X^T X$, widely used for individual sparse inference in the literature
\citep[e.g.,][]{castillo2015bayesian,belitser2017empirical}.
Note that our definition of $\lVert X\rVert_\ast$ is reduced to that norm if $\overline g=1$.
For a vector or matrix, we denote by $\lVert \cdot \rVert_\infty$ the max-norm, the maximum element of an object in absolute value.

We define the multinomial response variable $Y_{i\ell} = \mathbbm{1}(Z_i=\ell)$, $i=1,\dots,n$, $\ell=1,\dots,m-1$, such that  for any $i$, $\sum_{\ell=1}^{m-1} Y_{i\ell}=1$ if $Z_i>0$ and $\sum_{\ell=1}^{m-1} Y_{i\ell}=0$ otherwise. In what follows, we work with the response vector $Y=(Y_1^T,\dots,Y_n^T)^T\in\mathbb R^{n(m-1)}$, where $Y_i=(Y_{i1},\dots, Y_{i,m-1})^T\in\mathbb R^{m-1}$, $i=1,\dots,n$.
We write the density of $Y$ with respect to a dominating counting measure as $f_\beta^n$ for an arbitrary parameter $\beta$ and as $f_0^n$ for the true parameter $\beta_0$, respectively.
The notations $\mathbb P_0$ and  $\mathbb E_0$ denote the probability and expectation operators under the true model with $\beta_0$, respectively.
We also let $\mu=\mathbb E_0 Y$ and $W=\mathbb E_0 \{(Y-\mu)(Y-\mu)^T\}$ be the expected value and the covariance matrix, respectively, of $Y$ under the true model.

Some conditions on the design matrix $X$ are required for estimation of the high-dimensional regression coefficients.
We first define the following compatibility number:
\begin{align*}
\phi(S)=\inf\!\Bigg\{\frac{\lVert W^{1/2} X\beta \rVert_2 \sqrt{s} }{\lVert X\rVert_\ast \lVert\beta\rVert_{2,1}} : \lVert \beta_{S^c} \rVert_{2,1}\le 7\lVert \beta_S  \rVert_{2,1}, \beta_S\ne 0\Bigg \}.
\end{align*}
The constant 7 is of no particular interest and can be replaced with modifications of the constants appearing in our main results.
To recover the $\ell_{2,1}$- and $\ell_2$-contraction rates, we also define the ($W$-adjusted) uniform compatibility number and the smallest scaled singular value, respectively, as
\begin{align*}
\psi_1(s)&=\inf\!\Bigg\{\frac{\lVert W^{1/2} X\beta \rVert_2 \sqrt{s_\beta} }{\lVert X\rVert_\ast \lVert\beta\rVert_{2,1}}:1\le s_\beta\le s\Bigg\},\quad\psi_2(s)=\inf\!\Bigg\{\frac{\lVert W^{1/2}X\beta \rVert_2 }{\lVert X\rVert_\ast \lVert\beta\rVert_2}:1\le s_\beta\le s\Bigg\}.
\end{align*}
The definitions of $\phi$, $\psi_1$, and $\psi_2$ are modified from the compatibility conditions in \citet{castillo2015bayesian} in such a manner that they are suited for our logit models under group sparsity. 
More precisely, our $\phi$ and $\psi_1$ are defined with the $\ell_{2,1}$-norm for group sparse inference, whereas those in \citet{castillo2015bayesian} are defined with the $\ell_1$-norm.
The covariance matrix $W$ is also inserted to account for the non-quadratic likelihood ratio.
If we plug in the identity matrix for $W$ while imposing individual sparsity, then our definitions correspond to the compatibility conditions given in \citet{castillo2015bayesian} up to constants (see Remark~\ref{rmk:compat} below). By the Cauchy-Schwarz inequality, it follows that $\psi_2(s)\le \psi_1(s)$ for every $s\ge 1$. It is also easy to see that all our compatibility constants above  are bounded. This can easily be verified by evaluating them with a unit vector and the maximal eigenvalue of $W$ (see the proof of Lemma~\ref{lmm:lrbound} in Section~\ref{sec:mainproof}).

\begin{remark}[Alternative to $\phi$]
Our definition of the compatibility number $\phi$ is not directly reduced to that of \citet{castillo2015bayesian} even when individual sparsity is imposed and $W$ is the identity matrix, due to the fact that the sparse vector $\beta_S$ is used instead in the denominator of the ratio in \citet{castillo2015bayesian}. Along the same lines, our $\phi$ can be accordingly modified as
\begin{align*}
\phi_{\rm mod}(S)=\inf\!\Bigg\{\frac{\lVert W^{1/2} X\beta \rVert_2 \sqrt{s} }{\lVert X\rVert_\ast \lVert\beta_S\rVert_{2,1}} : \lVert \beta_{S^c} \rVert_{2,1}\le 7\lVert \beta_S  \rVert_{2,1}, \beta_S\ne 0\Bigg \}.
\end{align*}
It is trivial that $\phi_{\rm mod}(S)/8\le \phi(S)\le \phi_{\rm mod}(S)$ for every $S$, meaning that the two coefficients are essentially identical up to constants. It is not difficult to see that all our results established in this paper can be rendered with $\phi_{\rm mod}$ by modifying the appearing constants accordingly.
  \citet{atchade2017contraction} also defined his compatibility number in a manner similar to ours. We use $\phi$ rather than $\phi_{\rm mod}$ to compare our main results with those in that work fairly.
	\label{rmk:compat}
\end{remark}

\begin{remark}[Asymptotic order of $\lVert X \rVert_\ast$]
	The asymptotic behavior of $\lVert X \rVert_\ast$ is important for understanding how our compatibility conditions perform. Understanding this behavior is also essential, as $\lVert X \rVert_\ast$ appears in the main results on posterior contraction; see Theorem~\ref{c3thm:contraction} below. If $\overline g$ is bounded, then $\lVert X \rVert_\ast$ is typically of order $\sqrt{n}$ in usual regression settings, which can be easily verified by the inequality $\lVert A \rVert_{\rm F}/\sqrt{r}\le \lVert A \rVert_{\rm sp}\le \lVert A \rVert_{\rm F}$ for a matrix $A$ of rank $r$, where $\lVert \cdot \rVert_{\rm F}$ denotes the Frobenius norm.
	Although not as clearly as in the case of bounded $\overline g$, this asymptotic behavior may still hold even when $\overline g$ tends to infinity. For example, if each row of $X$ is independently drawn from a sub-Gaussian distribution, we still have $\lVert X\rVert_\ast\asymp\sqrt{n}$ with high probability; see Lemma~\ref{lmm:ratex} in Appendix. Thus, collinearity among the covariates may not affect the order of $\lVert X\rVert_\ast$ unless it approaches the perfect linearity as $n\rightarrow \infty$.
	\label{rmk:xorder}
\end{remark}

\subsection{Prior specification}
\label{c3sec:prior}
A prior distribution should be carefully designed to obtain the desired posterior contraction rate.
As is customary in individual sparse regression, we first select a group dimension $s$ from a prior distribution $\pi_p(s)$, and then randomly choose a group index set $S\subset \{1,\dots,p\}$ for given $s$. The nonzero part $\beta_S$ of the coefficients is then selected from a continuous prior density $h_S$ on $\mathbb{R}^{d_S}$ while $\beta_{S^c}$ is set to zero. The resulting prior distribution for $(S,\beta)$ is summarized as
\begin{align*}
(S,\beta)\mapsto \pi_p(s)\binom{p}{s}^{-1} h_S(\beta_S) \delta_0(\beta_{S^c}),
\end{align*}
where $\delta_0$ is the Dirac measure at zero on $\mathbb{R}^{d-d_S}$.

It remains to specify $\pi_p$ and $h_S$. For the prior $\pi_p$ on the group size, we consider a prior distribution such that for some constants $A_1, A_2, A_3, A_4>0$,
\begin{align}
\frac{A_1}{\max\{p, n^{\overline g}\}^{A_3}}  \le\frac{\pi_p(s)}{\pi_p(s-1)}\le \frac{A_2}{\max\{p, n^{\overline g}\}^{A_4}},\quad s=1,\dots,p.
\label{c3eqn:sprior}
\end{align}
This prior distribution is also modified from the one given in \citet{castillo2015bayesian} to suit our group sparse modeling. The term $\max\{p, n^{\overline g}\}$ holds the key to the adaptation to unknown group sparsity.
If $\overline g=1$, i.e., sparsity is imposed only at the individual level, then the prior in \eqref{c3eqn:sprior} is reduced to the one widely used in the high-dimensional setups \citep[e.g.,][]{castillo2015bayesian,martin2017empirical,belitser2017empirical,jeong2020posterior,jeong2020unified}.

For the prior density $h_S$ on the nonzero coefficients, we consider 
\begin{align}
h_S(\beta_S)=\left(\frac{\lambda}{\sqrt{\pi}}\right)^{d_S}  \frac{\prod_{j\in S} \Gamma(g_j/2)}{2^s\prod_{j\in S} \Gamma(g_j)} e^{-\lambda\lVert\beta_S\rVert_{2,1} },\quad \lambda=8\lVert X\rVert_\ast\sqrt{\max\{\log p,\overline g \log n\}}.
\label{c3eqn:nonzeroprior}
\end{align}
The density in \eqref{c3eqn:nonzeroprior} is a product of $s$-fold symmetric Kotz-type distributions \citep{fang1990symmetric}. 
It is easy to see that this density is reduced to a standard Laplace density if $\overline g=1$.
As in the Laplace prior for individual sparse regression in \citet{castillo2015bayesian}, the term $e^{ -\lambda\lVert\beta_S\rVert_{2,1} }$ and the scale parameter $\lambda$ hold the key to obtaining our target rate in group sparse estimation. The constant $8$ in $\lambda$ has no particular meaning and can be replaced with appropriate modifications. For linear regression, note that \citet{castillo2015bayesian} used a wider range of $\lambda$ to allow for decreasing sequences. In our setup with the logit model, however, it is unclear as to whether the  $\lambda$ in \eqref{c3eqn:nonzeroprior} can be weakened.

\section{Posterior contraction rates}
\label{c3sec:rates}
\subsection{Main results}

With the prior distribution $\Pi$ specified in Section~\ref{c3sec:prior},
the posterior distribution $\Pi(\,\cdot\,|Y)$ of $\beta$ is defined by Bayes' rule.
In this section, we study contraction properties of the posterior distribution under suitable assumptions on the design matrix $X$.

We first establish a bound for the effective group dimension, i.e., the number of groups with nonzero coefficients. 
The bound allows us to restrict our attention to models of relatively small size. 
The following theorem shows that the posterior distribution is concentrated on much smaller group dimensions than the full size $p$. 

\begin{theorem}[Effective group size]
	For the logit model in \eqref{c3eqn:model} and the prior specified in Section~\ref{c3sec:prior},
	there exists a constant $M_1>0$ such that for any $M_2>3$,
	\begin{align*}
	\sup_{\beta_0\in\mathcal B_1(M_1)}
	\mathbb{E}_0\Pi\left\{\beta:s_\beta > s_0 + \frac{M_2}{A_4}\left(1+\frac{33}{\phi^2(S_0)}\right)s_0\,\bigg|\,Y \right\}\rightarrow 0,
	\end{align*}
	where $\mathcal B_1(M)=\{\beta_0: s_0 \sqrt{\max\{\log p,\overline g \log n\}} \max_i\lVert X_i\rVert_\ast \le M \phi^2(S_0 ) \lVert X\rVert_\ast \}$.
	\label{c3thm:dimen}
\end{theorem}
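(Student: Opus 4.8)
The plan is to control the posterior mass of the overfitting set $U_R=\{\beta:s_\beta>R\}$, where $R=s_0+(M_2/A_4)(1+33/\phi^2(S_0))s_0$, through the evidence-ratio decomposition $\Pi(U_R\mid Y)=N/D$ with denominator $D=\int (f_\beta^n/f_0^n)\,d\Pi(\beta)$ and numerator $N=\int_{U_R}(f_\beta^n/f_0^n)\,d\Pi(\beta)$. First I would establish a denominator lower bound valid on an event $\mathcal A$ of $\mathbb P_0$-probability tending to one: restricting the integral to coefficients supported on the true index set $S_0$ inside a small $\ell_{2,1}$-ball around $\beta_0$, and combining a lower bound on the Kotz prior mass $h_{S_0}$ of that ball with a Kullback--Leibler-type lower bound on the likelihood ratio there, gives $D\gtrsim \pi_p(s_0)\binom{p}{s_0}^{-1}e^{-c_1 d_0-c_2}$ for explicit $c_1,c_2$. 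Membership $\beta_0\in\mathcal B_1(M_1)$ enters only to make this Kullback--Leibler control compatible with the prior-mass bound; since it constrains the sparsity $s_0$ relative to the design geometry and $\phi(S_0)$ but not the magnitude of the nonzero coefficients, no direct size restriction is imposed.

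For the numerator I would split $U_R$ according to the active support, writing $N=\sum_{s>R}\sum_{|S|=s}\pi_p(s)\binom{p}{s}^{-1}\int (f_\beta^n/f_0^n)\,h_S(\beta_S)\,d\beta_S$, and bound each inner integral on a good event $\mathcal G$. The likelihood-ratio bound of Lemma~\ref{lmm:lrbound} is the workhorse here: it upper-bounds $\log(f_\beta^n/f_0^n)$ by the linear functional $(Y-\mu)^T X(\beta-\beta_0)$ minus a convex lower-order term governed by the $W$-weighted prediction norm $\lVert W^{1/2}X(\beta-\beta_0)\rVert_2$. The linear part is mean-zero under $\mathbb P_0$, and its group-dual norm $\max_j\lVert X_{\cdot j}^T(Y-\mu)\rVert_2$ concentrates at the scale $\lVert X\rVert_\ast\sqrt{\log p\vee\overline g\log n}=\lambda/8$ by a vector Bernstein bound with a union bound over the $p$ groups; the factor $8$ in $\lambda$ is chosen precisely so that on $\mathcal G$ the linear part is a strict fraction of the prior exponent $\lambda\lVert\beta_S\rVert_{2,1}$ and is absorbed by the prior factor $e^{-\lambda\lVert\beta_S\rVert_{2,1}}$. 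The residual Gaussian-type integral over $\beta_S$ is then evaluated group by group, producing a bound of the form $(\mathrm{const})^{d_S}$ per support.

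Combining the two bounds, the posterior odds assigned to models of dimension $s$ against the true model is, on $\mathcal A\cap\mathcal G$, at most $(\pi_p(s)/\pi_p(s_0))(\binom{p}{s_0}/\binom{p}{s})$ times a bounded integral factor; by \eqref{c3eqn:sprior} the prior ratio contributes a penalty of order $(p\vee n^{\overline g})^{-A_4(s-s_0)}$, i.e.\ a rate $A_4\log(p\vee n^{\overline g})$ per excess group, which beats both the $\binom{p}{s}$ support count and the $(\mathrm{const})^{d_S}$ volume. Against this penalty sits the likelihood-ratio gain that spurious groups can buy; because the likelihood ratio controls only the prediction error $\lVert W^{1/2}X(\beta-\beta_0)\rVert_2$, the compatibility number $\phi(S_0)$ is what converts the prediction-error budget, of order $s_0\log(p\vee n^{\overline g})$ as dictated by the denominator bound and the choice of $\lambda$, into a bound on the number of extra active groups. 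This conversion, carried out on the cone $\lVert\beta_{S_0^c}\rVert_{2,1}\le 7\lVert\beta_{S_0}\rVert_{2,1}$ underlying $\phi(S_0)$, is where the factor $1/\phi^2(S_0)$ and the constant $33$ (inherited from the $7$ in the definition of $\phi$) appear. Dividing the budget by the per-group penalty rate $A_4\log(p\vee n^{\overline g})$ yields an admissible excess dimension of order $(s_0/A_4)(1+33/\phi^2(S_0))$, so taking $R$ as stated with any $M_2>3$ forces the cumulative odds over all $s>R$ to vanish; taking $\mathbb E_0$ and adding $\mathbb P_0((\mathcal A\cap\mathcal G)^c)\to 0$ finishes the argument uniformly over $\mathcal B_1(M_1)$.

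I expect the main obstacle to be twofold. The concentration step must produce $\max_j\lVert X_{\cdot j}^T(Y-\mu)\rVert_2\lesssim\lambda$ with the multi-category block structure handled correctly, since $Y-\mu$ is a vector of correlated multinomial residuals with covariance $W$, so a matrix/vector Bernstein inequality adapted to the $(m-1)$-dimensional blocks is needed rather than a scalar tail bound. More delicate still is the bookkeeping that keeps $\phi(S_0)$ explicit throughout and exactly lines up the prediction-error budget with the prior penalty rate $A_4$, so that the final excess-dimension coefficient reads $M_2/A_4$ and the compatibility dependence is the clean $33/\phi^2(S_0)$, rather than an unspecified constant absorbing these factors.
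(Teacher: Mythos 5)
Your overall architecture is the same as the paper's: the $N/D$ decomposition, a concentration event for $\max_j\lVert X_{\cdot j}^T(Y-\mu)\rVert_2$ (the paper's Lemma~\ref{c3lmm:maxprob}, proved via a quadratic-form moment bound rather than a vector Bernstein inequality, but to the same effect), the self-concordance bound of Lemma~\ref{lmm:lrbound}, a cone/compatibility case split producing the $1/\phi^2(S_0)$ term, and the per-group prior penalty $A_4\log(p\vee n^{\overline g})$ beating the support count. However, there is a genuine gap in your denominator bound. You claim $D\gtrsim \pi_p(s_0)\binom{p}{s_0}^{-1}e^{-c_1 d_0-c_2}$ with $c_1,c_2$ free of $\beta_0$, and assert that no size restriction is thereby imposed. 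This cannot hold: any lower bound on $D$ obtained by restricting to a small ball around $\beta_0$ must carry the Kotz prior density near $\beta_0$, which is of order $\lambda^{d_0}e^{-\lambda\lVert\beta_0\rVert_{2,1}}$ up to constants; since $\mathcal B_1(M_1)$ places no bound whatsoever on $\lVert\beta_0\rVert_{2,1}$, the factor $e^{-\lambda\lVert\beta_0\rVert_{2,1}}$ cannot be hidden inside $e^{-c_1 d_0-c_2}$. The paper's Lemma~\ref{lmm:lbo} keeps it explicit: $D\ge e^{-1/128}e^{-\lambda\lVert\beta_0\rVert_{2,1}}\pi_p(s_0)(p\vee n^{\overline g})^{-3s_0}$.

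The same omission corrupts your numerator step. Your claim that on the good event the linear term, bounded by $(\lambda/2)\lVert\beta-\beta_0\rVert_{2,1}$, ``is absorbed by the prior factor $e^{-\lambda\lVert\beta_S\rVert_{2,1}}$'' is false as stated: the prior exponent is centered at zero while the linear term is centered at $\beta_0$, so the triangle inequality leaves an uncancelled factor $e^{+c\lambda\lVert\beta_0\rVert_{2,1}}$ in the numerator. The crux of the argument --- and the very reason the theorem requires no size restriction on $\beta_0$ --- is that this numerator factor cancels \emph{exactly} against the $e^{-\lambda\lVert\beta_0\rVert_{2,1}}$ coming from the denominator; in the paper this cancellation is managed by display \eqref{eqn:cal1}, through the quantity $\lVert\beta_0\rVert_{2,1}-\lVert\beta\rVert_{2,1}+\frac{1}{2}\lVert\beta-\beta_0\rVert_{2,1}$, to which the cone/non-cone split is then applied. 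Your two errors are compensating, in that tracking the $e^{\pm\lambda\lVert\beta_0\rVert_{2,1}}$ factors on both sides turns your sketch into the paper's proof; but as written each of the two intermediate bounds is unprovable. Relatedly, your claim that $\beta_0\in\mathcal B_1(M_1)$ enters only to reconcile the denominator's Kullback--Leibler control with the prior mass is misplaced: in the paper the denominator needs only $d_0\le n$, and $\mathcal B_1(M_1)$ is used in the numerator's cone case (display \eqref{eqb:qqza}) to guarantee that the linear term is dominated by the $W$-weighted quadratic term there.
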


As in \citet{castillo2015bayesian} and \citet{atchade2017contraction}, the constants in our threshold are not optimized and hence have no particular meaning. A close examination of the proof reveals that the constants can be substantially improved if the response variable is binary, but we present the results with universal constants for categorical responses for simplicity.
The constants are nevertheless unimportant, as $A_4$ can be chosen to be as large as desired.

We are now ready to examine posterior contraction rates for the regression coefficients.
We first define $\xi_0=s_0+ A_4^{-1}\{4+{100}/{\phi^2(S_0)}\}s_0$ such that most of the posterior mass is concentrated on $s_\beta<\xi_0$ by Theorem~\ref{c3thm:dimen}.
The next theorem shows that the posterior distribution of $\beta$ contracts to $\beta_0$ at the desired rate with respect to the $\ell_{2,1}$- and $\ell_2$-metrics.
While \citet{atchade2017contraction} adopted the general posterior contraction theory with the entropy/testing approach \citep{ghosal2000convergence,ghosal2007convergence}, we deal directly with the expression for the posterior distribution of our logit model, making our proof much simpler while giving rise to faster rates. Still, as in \citet{atchade2017contraction}, our approach to the proof is based on bounds of the likelihood ratio derived from the self-concordant property \citep{bach2010self}; see Section~\ref{sec:pre} for more details.

\begin{theorem}[Posterior contraction]
	For the logit model in \eqref{c3eqn:model} and the prior specified in Section~\ref{c3sec:prior},
	there exist constants $M_3>0$ and $M_4>0$ such that
	\begin{align*}
	\sup_{\beta_0\in \mathcal B_2(M_3) }	
	{\mathbb E}_0\Pi\left\{\beta:\lVert W^{1/2} X(\beta-\beta_0)\rVert_2> \frac{M_4\sqrt{s_0\max\{\log p,\overline g \log n\} } }{\psi_1(\xi_0+s_0)\phi(S_0)}\,\bigg |\,Y \right\}&\rightarrow 0,\\
	\sup_{\beta_0\in \mathcal B_2(M_3) }	
	{\mathbb E}_0\Pi\left\{\beta:\lVert \beta-\beta_0\rVert_2> \frac{M_4\sqrt{s_0\max\{\log p,\overline g \log n\}}}{\psi_1(\xi_0+s_0)\psi_2(\xi_0+s_0)\phi(S_0)\lVert X\rVert_\ast} \,\bigg|\,Y \right\}&\rightarrow 0,\\
	\sup_{\beta_0\in \mathcal B_2(M_3) }	
	{\mathbb E}_0\Pi\left\{\beta:\lVert \beta-\beta_0\rVert_{2,1}>\frac{M_4s_0\sqrt{\max\{\log p,\overline g \log n\}}}{\psi_1^2(\xi_0+s_0)\phi^2(S_0)\lVert X\rVert_\ast} \,\bigg|\,Y \right\}&\rightarrow 0,
	\end{align*}
	where $\mathcal B_2(M)=\{\beta_0: s_0 \sqrt{\max\{\log p,\overline g \log n\}} \max_i\lVert X_i\rVert_\ast \le M \psi_1^2( \xi_0 +s_0)\phi^2(S_0) \lVert X\rVert_\ast \}$.
	\label{c3thm:contraction}
\end{theorem}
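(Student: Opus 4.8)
The plan is to reduce all three assertions to a single contraction bound for the prediction seminorm $\lVert W^{1/2}X(\beta-\beta_0)\rVert_2$ and then convert to the $\ell_2$- and $\ell_{2,1}$-metrics through the compatibility numbers. First I would invoke Theorem~\ref{c3thm:dimen}: choosing $M_2$ slightly larger than $3$ shows that the posterior concentrates on $\{s_\beta<\xi_0\}$, so that up to a $o(1)$ term every relevant $\beta$ satisfies $s_{\beta-\beta_0}\le s_\beta+s_0<\xi_0+s_0$, which is what licenses evaluating $\psi_1$ and $\psi_2$ at $\xi_0+s_0$. On this restricted event the conversions are routine: writing $\epsilon_n=\sqrt{s_0(\log p\vee\overline g\log n)}/\{\psi_1(\xi_0+s_0)\phi(S_0)\}$, once $\lVert W^{1/2}X(\beta-\beta_0)\rVert_2\le M_4\epsilon_n$ the smallest scaled singular value gives $\lVert\beta-\beta_0\rVert_2\le \lVert W^{1/2}X(\beta-\beta_0)\rVert_2/\{\psi_2(\xi_0+s_0)\lVert X\rVert_\ast\}$, and the uniform compatibility number together with $\xi_0+s_0\lesssim s_0/\phi^2(S_0)$ gives $\lVert\beta-\beta_0\rVert_{2,1}\le \sqrt{\xi_0+s_0}\,\lVert W^{1/2}X(\beta-\beta_0)\rVert_2/\{\psi_1(\xi_0+s_0)\lVert X\rVert_\ast\}$; these reproduce exactly the stated $\ell_2$- and $\ell_{2,1}$-rates. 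So the whole problem is the first display.

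For the prediction bound I would write the posterior mass of the bad set $B_n=\{\lVert W^{1/2}X(\beta-\beta_0)\rVert_2>M_4\epsilon_n,\,s_\beta<\xi_0\}$ as a ratio $N_n/D_n$ of integrals of the likelihood ratio $f_\beta^n/f_0^n$ against the prior. The central tool is the exact decomposition $\log (f_\beta^n/f_0^n)(Y)=(Y-\mu)^TX(\beta-\beta_0)-R(\beta)$, where $R(\beta)\ge0$ is the deterministic Bregman divergence of the log-partition function, whose Hessian at $\beta_0$ realizes $W$, together with the two-sided self-concordant bounds on $R$ from Section~\ref{sec:pre} \citep{bach2010self}. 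For the denominator I would restrict the integral to $S=S_0$ and a small $\ell_{2,1}$-ball around $\beta_0$; there the self-concordant upper bound makes $R(\beta)$ of order $\lVert W^{1/2}X(\beta-\beta_0)\rVert_2^2$, the linear term is controlled in mean (it is centered with variance proxy $\lVert W^{1/2}X(\beta-\beta_0)\rVert_2^2$ since $Y$ is bounded), and the prior contributes $\pi_p(s_0)\binom{p}{s_0}^{-1}e^{-\lambda\lVert\beta_{0,S_0}\rVert_{2,1}}$ times a volume factor. Using the prior-ratio condition \eqref{c3eqn:sprior} this yields $D_n\gtrsim e^{-C s_0(\log p\vee\overline g\log n)-\lambda\lVert\beta_{0,S_0}\rVert_{2,1}}$ with high probability.

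The structural part of the numerator bound is as follows. On $B_n$ the self-concordant lower bound gives $R(\beta)\gtrsim \lVert W^{1/2}X(\beta-\beta_0)\rVert_2^2$ provided $\lVert X(\beta-\beta_0)\rVert_\infty$ stays bounded, which is exactly what $\mathcal B_2(M_3)$ guarantees, since $\lVert X(\beta-\beta_0)\rVert_\infty\le \max_i\lVert X_i\rVert_\ast\lVert\beta-\beta_0\rVert_{2,1}$ and on $B_n$ the $\ell_{2,1}$-radius is of the stated order. Carrying the prior factor $e^{-\lambda\lVert\beta_S\rVert_{2,1}}$ through the ratio and cancelling the common $e^{-\lambda\lVert\beta_{0,S_0}\rVert_{2,1}}$ against the denominator (this is what removes any size restriction on $\beta_0$) produces, via the triangle inequality, a term proportional to $\lVert(\beta-\beta_0)_{S_0^c}\rVert_{2,1}-\lVert(\beta-\beta_0)_{S_0}\rVert_{2,1}$; on the region that dominates the integral this forces the cone condition $\lVert(\beta-\beta_0)_{S_0^c}\rVert_{2,1}\le 7\lVert(\beta-\beta_0)_{S_0}\rVert_{2,1}$ and activates $\phi(S_0)$, bounding $\lambda\lVert(\beta-\beta_0)_{S_0}\rVert_{2,1}$ by $\lesssim\sqrt{s_0(\log p\vee\overline g\log n)}\,\lVert W^{1/2}X(\beta-\beta_0)\rVert_2/\phi(S_0)$. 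Balancing the quadratic $R(\beta)$ against this linear-in-prediction term and against the complexity cost $s_0(\log p\vee\overline g\log n)$ shows the deterministic part of the exponent is at most $-cM_4^2\epsilon_n^2$; it remains only to control the genuinely stochastic fluctuation $(Y-\mu)^TX(\beta-\beta_0)$ uniformly over $B_n$.

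The hard part is precisely this uniform control over an uncountable set. I would slice $B_n$ into shells $\lVert W^{1/2}X(\beta-\beta_0)\rVert_2\in[2^{k}M_4\epsilon_n,2^{k+1}M_4\epsilon_n)$ and, within each shell, either integrate the sub-Gaussian linear term against the prior (whose $e^{-\lambda\lVert\beta_S\rVert_{2,1}}$ tail supplies summability) or cover the shell and union-bound over the $\binom{p}{s}$ supports, with the combinatorial cost absorbed by $\lambda\asymp\lVert X\rVert_\ast\sqrt{\log p\vee\overline g\log n}$. Since $R(\beta)\gtrsim\lVert W^{1/2}X(\beta-\beta_0)\rVert_2^2$ beats the linear fluctuation on every shell, the geometric sum over $k$ is dominated by the boundary shell $k=0$, so $\mathbb E_0 N_n$ is exponentially smaller than the denominator lower bound and $\mathbb E_0\Pi(B_n\mid Y)\to0$ once $M_4$ is large. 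Keeping the self-concordance correction from degenerating on that boundary shell, which is exactly why $\mathcal B_2(M_3)$ must bound $\lVert X(\beta-\beta_0)\rVert_\infty$, and tracking both compatibility numbers through the cone argument so that the final exponent matches $\epsilon_n^2$, are the delicate points.
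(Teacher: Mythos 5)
Your skeleton matches the paper's proof closely: reduction to $\{s_\beta\le\xi_0\}$ via Theorem~\ref{c3thm:dimen} (with $M_2$ slightly above $3$), conversion of the prediction-norm bound into the $\ell_2$- and $\ell_{2,1}$-rates through $\psi_2(\xi_0+s_0)$, $\psi_1(\xi_0+s_0)$ and $\xi_0+s_0\lesssim s_0/\phi^2(S_0)$, the denominator lower bound with the factor $e^{-\lambda\lVert\beta_0\rVert_{2,1}}\pi_p(s_0)(p\vee n^{\overline g})^{-3s_0}$ (the paper's Lemma~\ref{lmm:lbo}, which incidentally is deterministic, by Jensen's inequality and the symmetry of $h_{S_0}$, not merely high-probability), and the prior-ratio/cone calculus that activates $\phi(S_0)$. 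The gap is in the one step you yourself flag as the hard part: the uniform control of $(Y-\mu)^TX(\beta-\beta_0)$. The paper needs no peeling, no covering, and no union bound over supports. Its Lemma~\ref{c3lmm:maxprob} (proved via the quadratic-form MGF bound of Lemma~\ref{c3lmm:quad}, since the group scores $X_{\cdot j}^T(Y-\mu)$ are vectors) supplies a single event $\mathcal T_n=\{\max_j\lVert X_{\cdot j}^T(Y-\mu)\rVert_2\le 4\lVert X\rVert_\ast\sqrt{\log p\vee\overline g}\}$ on which the group-H\"older inequality gives $(Y-\mu)^TX(\beta-\beta_0)\le(\lambda/2)\lVert\beta-\beta_0\rVert_{2,1}$ \emph{pathwise and simultaneously for all $\beta$}; after that, the entire numerator bound is deterministic, closed by the elementary inequality $Ax-Bx^2/(2+Cx)\le 2A^2/((1-\delta)B)$, valid for all $x>0$ whenever $AC\le(1-\delta)B$ --- and it is exactly this coefficient condition, not a bound on $\lVert X(\beta-\beta_0)\rVert_\infty$ over the bad set, that $\mathcal B_2(M_3)$ encodes.

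Neither of your two proposed mechanisms fills this step as stated. Option (a), ``integrate the sub-Gaussian linear term against the prior,'' collapses: by Fubini, $\mathbb E_0\exp\{(Y-\mu)^TX(\beta-\beta_0)\}=\exp\{R(\beta)\}$ \emph{exactly} (the Bregman remainder is the log-MGF of the linear term), equivalently $\mathbb E_0 f_\beta^n/f_0^n=1$, so the expected numerator reduces to the prior mass of the bad set, which cannot overcome the factor $e^{-\lambda\lVert\beta_0\rVert_{2,1}}$ in the denominator; without an event or a test there is nothing negative left in the exponent. Option (b), covering plus union bound over the $\binom{p}{s}$ supports, is viable in principle but it is precisely the entropy/testing route of \citet{atchade2017contraction}, and you give no argument for why it would not reintroduce the losses the theorem is designed to remove ($\psi_2$ in place of $\psi_1$, $\sqrt n\lVert X\rVert_\infty$ in place of $\lVert X\rVert_\ast$, the extra sequence $c_n$). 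Finally, your summability claim ``$R(\beta)\gtrsim\lVert W^{1/2}X(\beta-\beta_0)\rVert_2^2$ beats the linear fluctuation on every shell'' is false on the outer shells: since the self-concordant denominator $2+4\max_i\lVert X_i\rVert_\ast\lVert\beta-\beta_0\rVert_{2,1}$ grows with the shell radius, the lower bound on $R(\beta)$ degenerates from quadratic to \emph{linear} in $\lVert W^{1/2}X(\beta-\beta_0)\rVert_2$ there, so the competition is linear against linear and is decided only by the coefficient condition above --- which is what the paper's single application of the elementary inequality, with no shells at all, resolves.
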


We comment on the obtained rates. To our knowledge, the minimax risk bounds for our setup have not been discovered previously in the literature, but the bounds for related settings are still useful to surmise the optimality of our results. 
Assume that $\lVert X\rVert_\ast\asymp \sqrt{n}$ as in Remark~\ref{rmk:xorder}.
With the exception of the compatibility conditions, our $\ell_2$-rate matches the minimax rate of group sparse linear regression with equal group sizes up to logarithmic factors \citep{lounici2011oracle}. 
Our rates also substantially refine the estimation rates established by \citet{blazere2014oracle} for generalized linear models with a group lasso.
Under the Bayesian framework, \citet{gao2015general} recently obtained the minimax posterior contraction in group sparse linear regression using elliptical priors. The Gram matrix $X^T X$ is incorporated into their prior to cancel some terms out nicely, but it is unclear as to whether the same approach can be used for our logit model, as the likelihood ratio is not quadratic. On the other hand, \citet{ning2018bayesian} obtained contraction rates comparable to ours for group sparse linear regression with unknown variance.

It is worth noting that our results are greatly simplified when $\overline g$ is bounded.
One particularly interesting example is the variable selection problem for the multinomial logit models in Example~\ref{c3exm:mul}, where $\overline g=m-1$.
This situation also includes the case where sparsity is imposed at the individual level only, i.e., $\overline g=1$.
 In the case of bounded $\overline g$, the term $\overline g \log n$ is removed from the results since $n< p$. Moreover, due to the relation $\lVert X\rVert_\infty\le \max_i\lVert X_i\rVert_\ast\le \sqrt{(m-1) \overline g}\lVert X\rVert_\infty$, the term $\max_i\lVert X_i\rVert_\ast$ can be replaced by $\lVert X\rVert_\infty$ in such a scenario.

\begin{remark}[Bounded compatibility constants]
Since the compatibility constants in our rates obscure the interpretation of the obtained rates, it may be of interest to establish the conditions under which they can be removed. In the linear regression setups, it is known that the compatibility constants can be bounded away from zero under mild conditions \citep{van2014higher}.
Due to the additional matrix $W$ appearing in the definitions, however, this is not the case for our logit setup.
Nonetheless, this is still possible under stronger conditions.
For example, if the true linear predictor $\lVert X\beta_0\rVert_\infty$ is known to be bounded such that the smallest eigenvalue of $W$ is bounded away from zero, our compatibility constants can be bounded away from zero under the same conditions as the linear model setups, as, in this case, it follows that $\lVert W^{1/2} X\beta\rVert_2 \gtrsim \lVert  X\beta\rVert_2 $. A stochastic bound on $\lVert X_i\beta_0\rVert_\infty$ would be sufficient; see, for example, Lemma A.4 of \citet{narisetty2019skinny}.
\end{remark}

\begin{remark}[Indirect size restriction on $\beta_0$]
As frequently mentioned above, as in \citet{atchade2017contraction}, our main results do not require direct size restrictions on $\beta_0$ through, say, $\lVert\beta_0\rVert_\infty$ or $\lVert\beta_0\rVert_2$.
This point is one of the main advantages our theory has over other results that hold only on some norm-bounded subsets \citep[e.g.,][]{wei2019contraction,narisetty2019skinny,lee2020bayesian}. Nonetheless, we should point out that some restrictions are indirectly rendered through the sets $\mathcal B_1$ and $\mathcal B_2$ in our main results. More specifically, both of these sets depend on the true group size $s_0$. Although the uniformity is restricted in such a manner, doing so is still allowable since  high-dimensional regression coefficients are often assumed to be sparse enough for sensible estimation. Indeed, our condition on $\mathcal B_2$ is very similar to holding the $\ell_{2,1}$-consistency.
  It is also trivial that our conditions are related to the true linear predictor $X\beta_0$ in a indirect manner, as our compatibility constants involve the matrix $W$ in their definitions. Notwithstanding these underlying limitations, our conditions are weaker than those of \citet{atchade2017contraction} (see Section~\ref{sec:comparison} below), not to mention other works relying on stronger norm-bounded subsets.
\end{remark}

\subsection{Comparison to \citet{atchade2017contraction} when $m=2$ and $\overline g=1$}
\label{sec:comparison}
Our modeling framework is reduced to binary logistic regression under individual sparsity when $m=2$ and $\overline g=1$. 
\citet{atchade2017contraction} used the same prior as ours in studying the contraction rates of high-dimensional logistic regression, so it is naturally of interest to compare our results with those established there. In fact, our Theorem~\ref{c3thm:dimen} and Theorem~\ref{c3thm:contraction} refine the results of Theorem~4 and Remark~5 in \citet{atchade2017contraction} under relaxed conditions. We now elucidate this point.

We reproduce the results of Theorem 4 in \citet{atchade2017contraction} using our notation.
Similar to our Theorem~\ref{c3thm:dimen}, \citet{atchade2017contraction} obtained a result for effective dimension
with the threshold $\tilde\xi_0=s_0+c_0\{1+{n\lVert X\rVert_\infty^2}/{(\lVert X\rVert_\ast^2 \phi^2(S_0))}+c_n\}s_0$ for some constant $c_0>0$ and possibly increasing sequence $c_n>0$.  Since $\lVert X\rVert_\ast\le \sqrt{\overline g n (m-1)}\lVert X \rVert_\infty=\sqrt{n }\lVert X \rVert_\infty$ for $m=2$ and $\overline g=1$, this threshold is clearly larger than the one we give in Theorem~\ref{c3thm:dimen}. 
In particular, our threshold is free of $c_n$, coming from the additional compatibility condition used in \citet{atchade2017contraction}, which can possibly cause a deterioration in the rate. 
Moreover, the $\ell_2$-contraction rate established by \citet{atchade2017contraction} is given by
\begin{align}
\frac{\sqrt{n}\lVert X\rVert_\infty \sqrt{\tilde\xi_0 \log p}}{ \psi_2^2(s_0+\tilde\xi_0) \lVert X\rVert_\ast^2} \asymp \max\left\{ \frac{\sqrt{n}\lVert X\rVert_\infty}{\lVert X\rVert_\ast \phi(S_0)}, \sqrt{c_n} \right\}\frac{\sqrt{n}\lVert X\rVert_\infty \sqrt{s_0 \log p}}{ \psi_2^2(s_0+\tilde\xi_0) \lVert X\rVert_\ast^2}.
\label{eqn:l2l2}
\end{align}
One can easily see that this rate is worse than our $\ell_2$-rate given in Theorem~\ref{c3thm:contraction}, due to the inequalities $\lVert X\rVert_\ast\le \sqrt{n }\lVert X \rVert_\infty$, $\xi_0\lesssim \tilde\xi_0$, and $\psi_2\le \psi_1$. The $\ell_1$-rate given in Remark~5 of \citet{atchade2017contraction} can also be compared to ours in a similar manner.

In addition, our boundedness conditions are weaker than those used in \citet{atchade2017contraction}.
To see this point, observe that the condition for his effective dimension translates into $\sqrt{n}\lVert X\rVert_\infty^2 s_0\sqrt{\log p} \lesssim \phi^2(S_0)\lVert X\rVert_\ast^2 $ (page 2 of the supplement of \citet{atchade2017contraction}). Clearly, this bound is stronger than ours on $\mathcal B_1$ since $\max_i\lVert X_i\rVert_\ast=\lVert X \rVert_\infty$ and $\lVert X\rVert_\ast\le \sqrt{n}\lVert X \rVert_\infty$  if $m=2$ and $\overline g=1$. Similarly, the $\ell_2$-rate condition, which translates into $\sqrt{n}\lVert X\rVert_\infty^2 \tilde \xi_0\sqrt{\log p} \lesssim\psi_2^2(s_0+\tilde\xi_0) \lVert X\rVert_\ast^2$ (page 3 of the supplement of \citet{atchade2017contraction}), is also stronger than our $\mathcal B_2$. This can be easily seen by expanding $\tilde \xi_0$ as in \eqref{eqn:l2l2}.

\section{Proofs of the main results}
\label{sec:mainproof}

\subsection{Preliminaries}
\label{sec:pre}

Here, we first provide intermediate results that are used to prove our main results. The proofs of Theorem~\ref{c3thm:dimen} and Theorem~\ref{c3thm:contraction} are deferred to Section~\ref{sec:proof}.

\subsubsection{Bounds of the likelihood ratio}

As in \citet{atchade2017contraction}, the self-concordant property \citep{bach2010self} holds the key to our approach to the proof. Self-concordant functions have the property that their third derivatives  are controlled by their second derivatives.
As a results, lower and upper Taylor expansions of such functions can be obtained \citep{bach2010self}.  In Lemma~\ref{c3lmm:selfcon} in Appendix, we show that the multi-category logit models in \eqref{c3eqn:model} hold the self-concordant property, thus allowing the construction of the upper and lower bounds for the likelihood ratio given below.

\begin{lemma} 
	 The logit model in \eqref{c3eqn:model} satisfies	
	\begin{align*}
	\frac{(\beta-\beta_0)^T X^T W X (\beta-\beta_0)  }{2+4\max_i\lVert X_i\rVert_\ast\lVert\beta-\beta_0\rVert_{2,1}} &\le (Y-\mu)^T X(\beta-\beta_0)-\log \frac{f_\beta^n}{f_0^n}(Y) 
	\le  \frac{1}{2}(\beta-\beta_0)^T X^T X (\beta-\beta_0).
	\end{align*}
	\label{lmm:lrbound}
\end{lemma}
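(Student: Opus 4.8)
The plan is to recognize the middle quantity as the Bregman divergence of the cumulant (log-partition) function and to control it from both sides through the Hessian along the segment joining $\beta_0$ and $\beta$. Writing the per-observation log-partition function as $b(\eta)=\log(1+\sum_{\ell=1}^{m-1}e^{\eta_\ell})$ for $\eta\in\mathbb R^{m-1}$ and setting $A(\beta)=\sum_{i=1}^n b(X_i\beta)$, the log-likelihood of $Y$ is $Y^TX\beta-A(\beta)$, so that $\log(f_\beta^n/f_0^n)(Y)=Y^TX(\beta-\beta_0)-\{A(\beta)-A(\beta_0)\}$. Since $\mu=\mathbb E_0Y$ satisfies $X^T\mu=\nabla A(\beta_0)$ (because $\nabla b(X_i\beta_0)=\mathbb E_0Y_i=\mu_i$), the $Y$-dependent terms cancel and
\[
(Y-\mu)^TX(\beta-\beta_0)-\log\frac{f_\beta^n}{f_0^n}(Y)=A(\beta)-A(\beta_0)-\nabla A(\beta_0)^T(\beta-\beta_0),
\]
which is exactly the Bregman divergence $D_A(\beta,\beta_0)$. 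Writing $\delta=\beta-\beta_0$ and $\gamma(t)=A(\beta_0+t\delta)$, Taylor's theorem with integral remainder gives $D_A(\beta,\beta_0)=\int_0^1(1-t)\ddot\gamma(t)\,dt$, where $\ddot\gamma(t)=\delta^TX^T V(\beta_0+t\delta)X\delta$ and $V(\beta)$ is the block-diagonal matrix with blocks $\nabla^2 b(X_i\beta)=\mathrm{Cov}_\beta(Y_i)$.

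For the upper bound I would note that each block $\nabla^2 b(\eta)=\mathrm{diag}(q)-qq^T$, where $q\in\mathbb R^{m-1}$ collects the category probabilities, satisfies $\nabla^2 b(\eta)\preceq\mathrm{diag}(q)\preceq I_{m-1}$ since $qq^T\succeq0$ and $q_\ell\le1$. Hence $V(\beta_0+t\delta)\preceq I_{n(m-1)}$ for every $t$, so $\ddot\gamma(t)\le\delta^TX^TX\delta$ and $D_A(\beta,\beta_0)\le\delta^TX^TX\delta\int_0^1(1-t)\,dt=\tfrac12\delta^TX^TX\delta$, which is the right-hand inequality.

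The lower bound is where the self-concordant property of Lemma~\ref{c3lmm:selfcon} enters. Setting $h_i=X_i\delta$ and $\phi_i(t)=\delta^TX_i^T\nabla^2 b(X_i\beta_0+th_i)X_i\delta$, the lemma yields the directional bound $|\phi_i'(t)|\le2\lVert h_i\rVert_\infty\phi_i(t)$, whence $\phi_i(t)\ge\phi_i(0)e^{-2\lVert h_i\rVert_\infty t}\ge\phi_i(0)e^{-2Ht}$ on $[0,1]$ with $H=\max_i\lVert h_i\rVert_\infty$. Summing over $i$ gives $\ddot\gamma(t)\ge\ddot\gamma(0)e^{-2Ht}=\delta^TX^TWX\delta\,e^{-2Ht}$, so with $c=2H$,
\[
D_A(\beta,\beta_0)\ge\delta^TX^TWX\delta\int_0^1(1-t)e^{-ct}\,dt=\delta^TX^TWX\delta\,\frac{c-1+e^{-c}}{c^2}.
\]
I would then invoke the elementary inequality $\tfrac{c-1+e^{-c}}{c^2}\ge\tfrac1{2+2c}$, valid for all $c\ge0$ because the function $c\mapsto(2+2c)(c-1+e^{-c})-c^2$ vanishes at $0$ and has derivative $2c(1-e^{-c})\ge0$. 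Finally, $\lVert h_i\rVert_\infty\le\lVert X_i\delta\rVert_2\le\lVert X_i\rVert_\ast\lVert\delta\rVert_{2,1}$ by the triangle inequality and the definition of $\lVert\cdot\rVert_\ast$, so that $2+2c=2+4H\le2+4\max_i\lVert X_i\rVert_\ast\lVert\beta-\beta_0\rVert_{2,1}$; since $\delta^TX^TWX\delta\ge0$, this delivers the left-hand inequality.

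The main obstacle is the lower bound, specifically the clean passage from self-concordance to the exponential control $\phi_i(t)\ge\phi_i(0)e^{-2\lVert h_i\rVert_\infty t}$. This hinges on Lemma~\ref{c3lmm:selfcon} supplying the third-derivative bound $|D^3 b(\eta)[h,h,h]|\le2\lVert h\rVert_\infty D^2 b(\eta)[h,h]$ for the multinomial log-partition function with the correct constant, which is precisely what propagates into the factor $4$ in the denominator. The remaining ingredients—the matrix inequality $\nabla^2 b\preceq I$, the scalar calculus inequality, and the norm comparison $\lVert h_i\rVert_\infty\le\lVert X_i\rVert_\ast\lVert\delta\rVert_{2,1}$—are routine once this bound is available.
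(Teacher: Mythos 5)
Your route is essentially the paper's: identify the middle quantity as the Bregman divergence of the log-partition sum $\sum_i b(X_i\beta)$, bound the Hessian blocks by the identity matrix for the right-hand inequality, and use self-concordance of $b$ for the left-hand one. Your Gr\"onwall-plus-integral-remainder derivation of $D_A(\beta,\beta_0)\ge \delta^TX^TWX\delta\,(c-1+e^{-c})/c^2$ is exactly how Proposition~1 of \citet{bach2010self}, which the paper invokes as a black box, is proved, so no new idea is introduced there; and your elementary observation $\mathrm{diag}(q)-qq^T\preceq \mathrm{diag}(q)\preceq I$ is a clean self-contained substitute for the paper's citation of Watson's spectral bound in the upper-bound step.

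There is, however, a genuine gap in the constant you feed into this machinery. You assert that Lemma~\ref{c3lmm:selfcon} yields $|\phi_i'(t)|\le 2\lVert h_i\rVert_\infty\,\phi_i(t)$, i.e.\ the third-derivative bound $|D^3b(\eta)[h,h,h]|\le 2\lVert h\rVert_\infty D^2b(\eta)[h,h]$. It does not: the lemma states $|\eta'''(t)|\le 4\lVert v\rVert_2\,\eta''(t)$, an $\ell_2$-type bound with constant $4$. This is not cosmetic. If you run your argument with the lemma's actual constant $c_i=4\lVert h_i\rVert_2$ together with your scalar inequality $(c-1+e^{-c})/c^2\ge 1/(2+2c)$, the denominator you obtain is $2+8\max_i\lVert X_i\rVert_\ast\lVert\beta-\beta_0\rVert_{2,1}$, which fails to prove the lemma as stated (off by a factor of $2$ in the linear term). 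Your $\ell_\infty$ bound with constant $2$ does happen to be true --- $D^3b(\eta)[h,h,h]$ is the third central moment of the bounded random variable $h^TY_i$ (with $h_0=0$ appended for the reference category), and a third central moment is at most the range of the variable, which is $\le 2\lVert h\rVert_\infty$, times its variance --- but you neither prove it nor can you extract it from the cited lemma, and you explicitly flag your reliance on the lemma for it. Two clean repairs: (a) keep Lemma~\ref{c3lmm:selfcon} as stated but replace your scalar inequality by the sharper $e^{-x}+x-1\ge x^2/(2+x)$, valid for all $x\ge 0$ (this is the paper's choice); then $c_i=4\lVert h_i\rVert_2$ and $\lVert h_i\rVert_2\le\lVert X_i\rVert_\ast\lVert\beta-\beta_0\rVert_{2,1}$ give exactly the denominator $2+4\max_i\lVert X_i\rVert_\ast\lVert\beta-\beta_0\rVert_{2,1}$; or (b) prove your $\ell_\infty$ self-concordance bound via the third-central-moment argument just sketched, after which your chain goes through verbatim.
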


\begin{proof}	 
	For any $x=(x_1,\dots,x_{m-1})^T\in\mathbb{R}^{m-1}$, define the function ${\sf exp}:\mathbb{R}^{m-1}\mapsto(0,\infty)^{m-1}$ such that $ {\sf exp}(x)=(e^{x_1},\dots,e^{x_{m-1}})^T$. We also write $1_{m-1}$ for the $(m-1)$-dimensional one-vector.
	Now, let $b:\mathbb{R}^{m-1}\mapsto(0,\infty)$ such that $b(\cdot) = \log(1+{\sf exp}(\cdot)^T 1_{m-1})$ and write its gradient vector and Hessian matrix as $\nabla b$ and  $\nabla^2 b$, respectively.
	We let $\theta_i=X_i\beta$ with an arbitrary $\beta$ and $\theta_{0i}=X_i\beta_0$ with the true $\beta_0$.
	We also define the expected value $\mu_i=(1+{\sf exp}(\theta_{0i})^T 1_{m-1})^{-1}{{\sf exp}(\theta_{0i})}$ and the covariance matrix $W_i={\rm diag}(\mu_i)-\mu_i\mu_i^T$ of $Y_i$ under the true model such that $\mu=(\mu_1^T,\dots,\mu_n^T)^T$ and $W$ is  the block-diagonal matrix formed by stacking $W_i$, $i=1,\dots,n$.
	Observe that $\nabla b(\theta_{0i})=\mu_i$ and $\nabla^2 b(\theta_{0i})=W_i$.
	Thus, one can easily check that
	\begin{align}	
	\begin{split}
	(Y-\mu)^T X(\beta-\beta_0)-\log \frac{f_\beta^n}{f_0^n}(Y) &= \sum_{i=1}^n\left\{ \log\frac{1+{\sf exp}(\theta_i)^T 1_{m-1}}{1+{\sf exp}(\theta_{0i})^T 1_{m-1}} -\frac{{\sf exp}(\theta_{0i})^T(\theta_i-\theta_{0i})}{1+{\sf exp}(\theta_{0i})^T 1_{m-1}}\right\}\\
	&=\sum_{i=1}^n\left\{ b(\theta_i)-b(\theta_{0i}) - \nabla b(\theta_{0i})^T (\theta_i-\theta_{0i}) \right\}.
	\end{split}
	\label{eqn:lrbound}
	\end{align}	
	Using Proposition~1 of \citet{bach2010self} and Lemma~\ref{c3lmm:selfcon} in Appendix,
	the display is bounded below by
	\begin{align*}
	&\sum_{i=1}^n \frac{(\theta_i-\theta_{0i})^T [\nabla^2 b(\theta_{0i})](\theta_i-\theta_{0i}) }{16\lVert\theta_i-\theta_{0i}\rVert_2^2} (e^{-4\lVert\theta_i-\theta_{0i}\rVert_2}+4\lVert\theta_i-\theta_{0i}\rVert_2-1)\\
	&\quad \ge \sum_{i=1}^n \frac{(\theta_i-\theta_{0i})^T [\nabla^2 b(\theta_{0i})](\theta_i-\theta_{0i}) }{2+4\lVert\theta_i-\theta_{0i}\rVert_2},
	\end{align*}
	where the inequality holds since $e^{-x}+x-1 \ge x^2/(2+x)$ for every $x\ge0$, verifying the first inequality of the assertion.
	
	By the Taylor expansion, \eqref{eqn:lrbound} is bounded by $(1/2)\sum_{i=1}^n (\theta_i-\theta_{0i})^T [\nabla^2 b(\tilde\theta_i)](\theta_i-\theta_{0i})$ 
	for some $\tilde\theta_i$ that lies between $\theta_i$ and $\theta_{0i}$.
	Observe that $\nabla^2 b(\tilde\theta_i)$ is still a covariance matrix of a multinomial random variable with some parameters. By \citet{watson1996spectral}, one can easily see that $\max_i\lVert\nabla^2 b(\tilde\theta_i)\rVert_{\rm sp}\le 1$, which verifies the second inequality of the assertion. (Since \citet{watson1996spectral} deals with extended multinomial variables for which the sum of the probability vector is $1$, we use the fact that the largest eigenvalue of a principal submatrix is not larger than that of the original matrix.) 
\end{proof}

\subsubsection{Tail probability of $\max_{1\le j\le p}\lVert X_{\cdot j}^T (Y-\mu)\rVert_2$}

Our proof requires a tail probability of $\max_{1\le j\le p}\lVert X_{\cdot j}^T (Y-\mu)\rVert_2$. This is similar in spirit to \citet{castillo2015bayesian} and \citet{atchade2017contraction} being based on such bounds for a scalar version of $X_{\cdot j}^T (Y-\mu)$ under individual sparsity. While in those papers the bounds are trivially obtained by the tail inequality for normal distributions or Hoeffding's inequality, our situation is more complicated as $X_{\cdot j}^T (Y-\mu)$ is a $g_j$-dimensional vector due to the group sparse modeling. Here we formally derive the required tail inequality. Our bound is derived by the tail property of quadratic forms of bounded random vectors, provided in Lemma~\ref{c3lmm:quad} in Appendix.  Similar bounds are also obtainable in other studies on sub-Gaussian vectors \citep[e.g.,][]{hsu2012tail,zajkowski2020bounds,jin2019short}, but we aim here to obtain a bound with a specific constant.

\begin{lemma} For the logit model in \eqref{c3eqn:model} with any $\beta_0\in\mathbb{R}^{d}$,
	\begin{align*}
	{\mathbb P}_0\left\{ \max_{1\le j\le p}\lVert X_{\cdot j}^T (Y-\mu)\rVert_2 > 4 \lVert X \rVert_\ast\sqrt{\max\{\log p,\overline g \}}  \right\} \le  \max\{p, n^{\overline g}\}^{-3/4}.
	\end{align*}
	\label{c3lmm:maxprob}
\end{lemma}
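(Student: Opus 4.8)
The plan is to reduce to a single group by a union bound over $j=1,\dots,p$ and, for each fixed $j$, to control $\lVert X_{\cdot j}^T(Y-\mu)\rVert_2$ by viewing its square as a quadratic form in the bounded, mean-zero vector $Y-\mu$ and invoking Lemma~\ref{c3lmm:quad}. Writing $A_j=X_{\cdot j}X_{\cdot j}^T\succeq0$, we have $\lVert X_{\cdot j}^T(Y-\mu)\rVert_2^2=(Y-\mu)^T A_j(Y-\mu)$, so the target event is the excursion of this quadratic form above $16\lVert X\rVert_\ast^2 L$, where $L=\log p\vee\overline g\log n=\log(p\vee n^{\overline g})$ is the scale that also drives the prior parameter $\lambda$.

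First I would record the three deterministic quantities that govern the tail. Since $W$ is block diagonal with $\lVert W\rVert_{\rm sp}=\max_i\lVert W_i\rVert_{\rm sp}\le1$ (shown in the proof of Lemma~\ref{lmm:lrbound}) and ${\rm rank}(X_{\cdot j})\le g_j\le\overline g$, the mean obeys
\[
\mathbb E_0\big[(Y-\mu)^T A_j(Y-\mu)\big]={\rm tr}(X_{\cdot j}^T W X_{\cdot j})\le\lVert W\rVert_{\rm sp}\lVert X_{\cdot j}\rVert_{\rm F}^2\le g_j\lVert X_{\cdot j}\rVert_{\rm sp}^2\le\overline g\lVert X\rVert_\ast^2.
\]
For the fluctuation I would bound the two matrix norms entering Lemma~\ref{c3lmm:quad}: $\lVert A_j\rVert_{\rm sp}=\lVert X_{\cdot j}\rVert_{\rm sp}^2\le\lVert X\rVert_\ast^2$ and $\lVert A_j\rVert_{\rm F}^2=\sum_k\sigma_k^4(X_{\cdot j})\le g_j\lVert X_{\cdot j}\rVert_{\rm sp}^4\le\overline g\lVert X\rVert_\ast^4$. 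Boundedness of $Y-\mu$ — each block $Y_i-\mu_i$ lies in a fixed ball and the blocks are independent across $i$ — supplies the sub-Gaussian constant needed by the lemma.

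Next I would apply Lemma~\ref{c3lmm:quad} to the centered form $(Y-\mu)^T A_j(Y-\mu)-\mathbb E_0[(Y-\mu)^T A_j(Y-\mu)]$ at the deviation $t$ for which $16\lVert X\rVert_\ast^2 L$ sits above the mean; as the mean is at most $\overline g\lVert X\rVert_\ast^2\le L\lVert X\rVert_\ast^2$, the usable deviation satisfies $t\ge15\lVert X\rVert_\ast^2 L$. The crux is that both branches of the resulting minimum exceed a fixed multiple of $L$: the sub-exponential branch gives $t/\lVert A_j\rVert_{\rm sp}\gtrsim L$, while the sub-Gaussian branch gives $t^2/\lVert A_j\rVert_{\rm F}^2\gtrsim L^2/\overline g\ge L$, the last inequality using $\overline g\le\overline g\log n\le L$. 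Hence each group obeys a tail of the form $(p\vee n^{\overline g})^{-c}$ with a constant $c$ that the explicit factor $4$ in the threshold can render larger than $7/4$. A union bound over the $p\le p\vee n^{\overline g}$ groups then yields $p\,(p\vee n^{\overline g})^{-c}\le(p\vee n^{\overline g})^{1-c}\le(p\vee n^{\overline g})^{-3/4}$.

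I expect the main obstacle to be the per-group tail with a sharp enough constant. A direct covering-number argument over the sphere $S^{g_j-1}$ would cost a factor $\exp(cg_j)$ that cannot be absorbed into the threshold; this is exactly why the estimate must be routed through the quadratic-form bound of Lemma~\ref{c3lmm:quad}, where the dimension enters only through $\lVert A_j\rVert_{\rm F}$ rather than through an exponential covering penalty. The remaining work is bookkeeping: checking that the mean contribution, the two norm branches, and the union factor all collapse to powers of $p\vee n^{\overline g}$ through the identity $\log(p\vee n^{\overline g})=\log p\vee\overline g\log n$ and the elementary bound $\overline g\le L$.
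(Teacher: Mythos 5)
Your outline---a union bound over the $p$ groups followed by an application of Lemma~\ref{c3lmm:quad} to the quadratic form $(Y-\mu)^T X_{\cdot j}X_{\cdot j}^T(Y-\mu)$---is the same route the paper takes, but two of your steps do not hold up, and together they constitute a genuine gap.

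First, you bound the wrong event. The lemma's threshold is $4\lVert X\rVert_\ast\sqrt{\log p\vee\overline g}$, with no $\log n$ inside the root; you instead control the excursion above $4\lVert X\rVert_\ast\sqrt{L}$ with $L=\log p\vee\overline g\log n$. Since $L\ge \log p\vee\overline g$, your event is contained in the lemma's event, so what you prove is strictly weaker than the statement: the scale $L$ drives the prior parameter $\lambda$, but not this threshold. Your bookkeeping also leans on the inflation: the inequality $\overline g\le \overline g\log n\le L$ and the collapse $p\,e^{-cL}\le (p\vee n^{\overline g})^{1-c}$ are exactly the places where $L$ cannot be replaced by $\log p\vee\overline g$ (with the stated threshold the same arithmetic produces powers of $p\vee e^{\overline g}$, not of $p\vee n^{\overline g}$). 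The paper's own proof works at the stated threshold, choosing $t=4\lVert X\rVert_\ast\sqrt{\log p\vee\overline g}$ and ending with the per-group exponent $-2(\log p\vee\overline g)+\overline g/4$; your weaker variant would still suffice for the events $\mathcal T_n$ used in Theorems~\ref{c3thm:dimen} and~\ref{c3thm:contraction}, since there one only needs $\max_j\lVert X_{\cdot j}^T(Y-\mu)\rVert_2\le \lambda/2$, but it is not the lemma as stated.

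Second, and more fundamentally, Lemma~\ref{c3lmm:quad} is not the Hanson--Wright inequality you invoke. It is a moment-generating-function bound for the \emph{uncentered} form $Z^TQZ$, and the only matrix functionals appearing in it are ${\rm tr}(Q)$ and $\lVert Q\rVert_{\rm sp}$: the Frobenius norm never enters, no centering is performed, and no two-branch tail $\min\{t^2/\lVert Q\rVert_{\rm F}^2,\;t/\lVert Q\rVert_{\rm sp}\}$ follows from it as stated. So the step ``apply Lemma~\ref{c3lmm:quad} to the centered form and check that both branches exceed a multiple of $L$'' would fail; to run it you would need the genuine Hanson--Wright inequality, which lies outside the paper and whose unspecified absolute constant defeats your claim that ``the explicit factor $4$ in the threshold can render $c>7/4$''---obtaining an explicit constant is precisely why the paper proves Lemma~\ref{c3lmm:quad} rather than citing such results. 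The correct use of the lemma is the paper's Chernoff step: by Markov's inequality and the MGF bound with the fixed choice $u=1/(8\lVert X_{\cdot j}\rVert_{\rm sp}^2)$, one gets $\mathbb P_0\{\lVert X_{\cdot j}^T(Y-\mu)\rVert_2>t\}\le \exp\{-t^2/(8\lVert X_{\cdot j}\rVert_{\rm sp}^2)+g_j/4\}$, where the dimension enters only through ${\rm tr}(X_{\cdot j}X_{\cdot j}^T)\le g_j\lVert X_{\cdot j}\rVert_{\rm sp}^2$, and the additive term $g_j/4\le\overline g/4$ is dominated by $t^2/(8\lVert X\rVert_\ast^2)$; no branch optimization is needed. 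Relatedly, your refined mean computation ${\rm tr}(X_{\cdot j}^TWX_{\cdot j})\le\overline g\lVert X\rVert_\ast^2$ is correct as a statement about $\mathbb E_0$, but it cannot be fed into Lemma~\ref{c3lmm:quad}, whose bound is expressed through the support constants $\bar b_j,\tilde b_j$ of $Y-\mu$, not through the true covariance $W$.
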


\begin{proof}
	Note that $Y-\mu$ has a bounded support.
	By the Markov inequality followed by Lemma~\ref{c3lmm:quad}, we have that for every $t>0$ and $u< 1/( 4\lVert X_{\cdot j}\rVert_{\rm sp}^2)$,
	\begin{align*}
	{\mathbb P}_0\left\{\lVert X_{\cdot j}^T (Y-\mu) \rVert_2> t\right\}&\le e^{-ut^2} \mathbb{E}_0\exp\left\{ u\lVert  X_{\cdot j}^T (Y-\mu) \rVert_2^2 \right\}\le e^{-ut^2}\exp\left\{ \frac{u \cdot  {\rm tr} (X_{\cdot j} X_{\cdot j}^T) }{1-4 u \lVert X_{\cdot j} \rVert_{\rm sp}^2}  \right\},
	\end{align*}
	for $k=1,\dots,p$.
	Note that ${\rm tr} (X_{\cdot j} X_{\cdot j}^T)\le g_j \lVert  X_{\cdot j} \rVert_{\rm sp}^2$ since the rank of $X_{\cdot j}$ is at most $g_j$. Hence, by choosing $u= 1/ (8\lVert X_{\cdot j}\rVert_{\rm sp}^2)$, the rightmost side of the last display is further bounded by
	\begin{align*}
	\exp\left(-\frac{t^2}{ 8 \lVert X_{\cdot j} \rVert_{\rm sp}^2} +  \frac{g_j}{4} \right)\le \exp\left(-\frac{t^2}{ 8 \lVert X \rVert_\ast^2} +  \frac{\overline g}{4} \right).
	\end{align*}
	Choosing $t=4 \lVert X \rVert_\ast\sqrt{\max\{\log p,\overline g \} }$, we obtain
	\begin{align*}
	{\mathbb P}_0\left\{ \max_{1\le j\le p}\lVert X_{\cdot j}^T (Y-\mu)\rVert_2 > 4 \lVert X \rVert_\ast\sqrt{\max\{\log p,\overline g \} }  \right\}&\le p \exp\left\{-2 (\max\{\log p,\overline g \}) + \overline g/4 \right\}\\
	&\le \max\{p, n^{\overline g}\}^{-3/4} .
	\end{align*}
	This leads to the desired assertion.
\end{proof}

\subsubsection{Lower bound of the denominator of the posterior}
A lower bound for the denominator of the posterior is essential in establishing the posterior contraction rates \citep{ghosal2000convergence,ghosal2007convergence}. Below, we derive a lower bound that gives rise to our target rate.
\begin{lemma} 
	For the model in \eqref{c3eqn:model} and the prior specified in Section~\ref{c3sec:prior}, if $d_0\le n$,
	\begin{align*}
	\int_{\mathbb R^{d}} \frac{f_\beta^n}{f_0^n}(Y) d\Pi(\beta)\ge e^{-1/128} e^{-\lambda\lVert \beta_0 \rVert_{2,1}}\frac{ \pi_p(s_0)}{\max\{p, n^{\overline g}\}^{3s_0}}.
	\end{align*}
	\label{lmm:lbo}
\end{lemma}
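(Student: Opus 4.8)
The plan is to bound the integral from below by restricting the prior to the correct model $S_0$ and to a small $\ell_{2,1}$-ball centred at $\beta_0$, on which both the prior density and the likelihood ratio are easy to control. Keeping only the summand $S=S_0$ in the prior mixture extracts the factor $\pi_p(s_0)\binom{p}{s_0}^{-1}$ and reduces the problem to an integral over $\beta_{S_0}\in\mathbb R^{d_0}$ with $\beta_{S_0^c}=0$. Writing $\beta_{S_0}=\beta_{0,S_0}+t$ and restricting $t$ to the ball $B=\{t\in\mathbb R^{d_0}:\lVert t\rVert_{2,1}\le r\}$ with radius $r=1/(8\lVert X\rVert_\ast)$, the triangle inequality $\lVert\beta_{0,S_0}+t\rVert_{2,1}\le\lVert\beta_0\rVert_{2,1}+r$ gives $h_{S_0}(\beta_{0,S_0}+t)\ge C_{S_0}e^{-\lambda\lVert\beta_0\rVert_{2,1}}e^{-\lambda r}$ on $B$, where $C_{S_0}$ denotes the normalizing constant of $h_{S_0}$.

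For the likelihood ratio I would invoke the upper bound in Lemma~\ref{lmm:lrbound}, which rearranges to $\log(f_\beta^n/f_0^n)(Y)\ge (Y-\mu)^TXt-\tfrac12 t^TX^TXt$ (here $X(\beta-\beta_0)=Xt$ since $t$ is supported on $S_0$). Because $t^TX^TXt=\lVert Xt\rVert_2^2\le\lVert X\rVert_\ast^2\lVert t\rVert_{2,1}^2\le\lVert X\rVert_\ast^2 r^2=1/64$ on $B$, the quadratic term costs at most a factor $e^{-1/128}$, which is exactly the constant in the statement. The remaining linear term $(Y-\mu)^TXt$ is sign-indefinite, and the decisive device is that $B$ is symmetric about the origin: substituting $t\mapsto-t$ gives $\int_B e^{(Y-\mu)^TXt}\,dt=\int_B\cosh\big((Y-\mu)^TXt\big)\,dt\ge\vol(B)$. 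This holds for every realization of $Y$, which is why the final bound is deterministic (matching the absence of any expectation in the statement).

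It then remains to evaluate $C_{S_0}\vol(B)$. Computing $\vol(B)$ via a Dirichlet integral in the group radii $\lVert t_j\rVert_2$ and multiplying by $C_{S_0}$, all gamma factors cancel and one obtains the clean identity $C_{S_0}\vol(B)=(\lambda r)^{d_0}/d_0!$, the exact group analogue of the individual-sparsity computation (where $d_0=s_0$). With the chosen $r$ one has $\lambda r=\sqrt{\log p\vee\overline g\log n}=\sqrt{\log(p\vee n^{\overline g})}$; write $L_n=\log(p\vee n^{\overline g})$. Collecting the factors, I would be left to verify the elementary inequality $\binom{p}{s_0}^{-1}\frac{L_n^{d_0/2}}{d_0!}\,e^{-\sqrt{L_n}}\ge(p\vee n^{\overline g})^{-3s_0}$.

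For this final arithmetic, $\binom{p}{s_0}^{-1}\ge p^{-s_0}\ge(p\vee n^{\overline g})^{-s_0}$ accounts for one of the three factors, and after taking logarithms it suffices to check $2s_0 L_n+\tfrac{d_0}{2}\log L_n-\log d_0!-\sqrt{L_n}\ge0$. Dropping the nonnegative term $\tfrac{d_0}{2}\log L_n$, the only nontrivial point is controlling $\log d_0!$: using $\log d_0!\le d_0\log d_0\le d_0\log n\le s_0\overline g\log n\le s_0 L_n$ (where $d_0\le s_0\overline g$, the hypothesis $d_0\le n$, and $\overline g\log n\le L_n$ are used in turn), the net exponent is at least $s_0 L_n-\sqrt{L_n}\ge L_n-\sqrt{L_n}\ge0$ once $n$ is large enough that $L_n\ge1$. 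This is the sole place the assumption $d_0\le n$ enters, and it is the main technical obstacle: the volume identity and the symmetrization are clean, whereas keeping the $\log d_0!$ loss below $2s_0 L_n$ for possibly large groups is precisely what forces the dimension restriction.
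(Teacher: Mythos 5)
Your proof is correct and follows essentially the same route as the paper: restrict the prior to $S=S_0$, lower-bound the likelihood ratio by the second inequality of Lemma~\ref{lmm:lrbound}, eliminate the linear term $(Y-\mu)^TXt$ by symmetry, absorb the quadratic term into the factor $e^{-1/128}$ on the $\ell_{2,1}$-ball of radius $1/(8\lVert X\rVert_\ast)$, evaluate the prior mass of that ball as $(\lambda r)^{d_0}e^{-\lambda r}/d_0!$, and finish with the same arithmetic ($\binom{p}{s_0}\le p^{s_0}$, $d_0!\le d_0^{d_0}\le n^{\overline g s_0}$ via $d_0\le n$, $e^{-\sqrt{L_n}}\ge e^{-L_n}$, and $s_0\ge 1$). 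The only differences are cosmetic: you pair $t$ with $-t$ via $\cosh$ on the symmetric ball where the paper applies Jensen's inequality to the symmetric tilted density, and you obtain the ball's prior mass by an explicit Dirichlet-integral volume computation where the paper invokes the gamma--Poisson identity for $\lVert\beta_{S_0}\rVert_{2,1}$ --- both devices produce identical intermediate quantities.
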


\begin{proof}
	Restricting the set to $S=S_0$, note first that
	\begin{align}
	&\int_{\mathbb R^{d}} \frac{f_\beta^n}{f_0^n}(Y) d\Pi(\beta)\ge \frac{\pi_p(s_0)}{\binom{p}{s_0}} \int_{\mathbb R^{d}} \frac{f_\beta^n}{f_0^n}(Y)h_{S_0}(\beta_{S_0})d\beta_{S_0}\otimes\delta(\beta_{S_0^c}).
	\label{eqn:qwbo}
	\end{align}
	Let $X_{S_0}\in\mathbb R^{n(m-1)\times d_0}$ be the submatrix of $X$ with columns chosen by $S_0$. 
	By Lemma~\ref{lmm:lrbound}, the integral term of the preceding display is  bounded below by
	\begin{align*}
	&\int_{\mathbb R^{d_0}} \exp\left\{ (Y-\mu)^T X_{S_0}(\beta_{S_0}-\beta_{0,S_0})-\frac{1}{2}\lVert X_{S_0} (\beta_{S_0}-\beta_{0,S_0})\rVert_2^2\right\} h_{S_0}(\beta_{S_0})d\beta_{S_0}\\
	&\quad\ge e^{-\lambda\lVert \beta_0 \rVert_{2,1}} \int_{\mathbb R^{d_0}} \exp\left\{ (Y-\mu)^T X_{S_0}\beta_{S_0}-\frac{1}{2}\lVert X_{S_0} \beta_{S_0}\rVert_2^2\right\} h_{S_0}(\beta_{S_0})d\beta_{S_0},
	\end{align*}
	where the inequality $h_{S_0}(\beta_{S_0})\ge e^{-\lambda\lVert \beta_{0,S_0} \rVert_{2,1}}h_{S_0}(\beta_{S_0}-\beta_{0,S_0})$ is employed. Following \citet{castillo2015bayesian}, using Jensen's inequality, the integral term in the last display is bounded below by
	\begin{align}
	\int_{\mathbb R^{d_0}} \exp\left\{-\frac{1}{2}\lVert X_{S_0} \beta_{S_0}\rVert_2^2\right\} h_{S_0}(\beta_{S_0})d\beta_{S_0}\ge e^{-1/128} \int_{\lVert X\rVert_\ast \lVert\beta_{S_0}\rVert_{2,1}\le 1/8} h_{S_0}(\beta_{S_0})d\beta_{S_0},
	\label{eqn:lbb1}
	\end{align}
	since $\lVert X_{S_0} \beta_{S_0}\rVert_2^2\le \lVert X_{S_0} \rVert_\ast \lVert\beta_{S_0}\rVert_{2,1}\le \lVert X \rVert_\ast \lVert\beta_{S_0}\rVert_{2,1}$.
	
	Based on our prior for $\beta$, it is not hard to see that $\lVert\beta_j\rVert_2$ has a gamma distribution with rate parameter $g_j$ and scale parameter $\lambda$.
	Since it follows that $\lVert\beta_S\rVert_{2,1}$ has a gamma distribution with rate parameter $d_S$ and scale parameter $\lambda$, using  the Poisson-gamma relationship,
	\begin{align*}
	\int_{\lVert\beta\rVert_{2,1}\le a} h_S(\beta_S)d\beta_S = \sum_{k=d_S}^\infty\frac{(a\lambda)^k e^{-a\lambda}}{k!} \ge  \frac{(a\lambda)^{d_S}e^{-a\lambda}}{d_S!}.
	\end{align*}
	Therefore, \eqref{eqn:lbb1} is bounded below by
	\begin{align*}
	\frac{(\lambda/(8\lVert X\rVert_\ast))^{d_0}e^{-\lambda/(8\lVert X\rVert_\ast)}}{d_0!} \ge \frac{e^{-\sqrt{\max\{\log p,\overline g \log n\}}}}{n^{\overline g s_0}}\ge \frac{1}{\max\{p, n^{\overline g}\} n^{\overline g s_0}},
	\end{align*}
	where the inequality $d_0!\le d_0^{d_0}\le n^{\overline g s_0}$ is utilized. Since $s_0\ge 1$ and $\binom{p}{s_0}\le p^{s_0}$, putting everything together, \eqref{eqn:qwbo} is bounded below by $e^{-1/128} e^{-\lambda\lVert \beta_0 \rVert_{2,1}} \pi_p(s_0)\max\{p, n^{\overline g}\}^{-3s_0}$, which leads to the desired assertion.
\end{proof}

\subsection{Proof of Theorem~\ref{c3thm:dimen} and Theorem~\ref{c3thm:contraction}}
\label{sec:proof}
We are now ready to prove the main results.

\begin{proof}[Proof of Theorem~\ref{c3thm:dimen}]
	Let $\mathcal T_n$ be the event in Lemma~\ref{c3lmm:maxprob}. Define $\mathcal B=\{ \beta: s_\beta> R \}$ for some $R\ge s_0$ to be specified later.
	By Lemma~\ref{c3lmm:maxprob}, we only need to show that $\mathbb E_0\Pi(\mathcal B|Y) \mathbbm 1_{\mathcal T_n}$ tends to zero uniformly over the set given in the theorem, for some appropriately chosen $R$. It is not difficult to see that $\lVert X\rVert_\ast\le \sqrt{n}\max_i\lVert X_i\rVert_\ast$; hence, the set $\mathcal B_1$ is stronger than the condition $d_0\le n$.
	Thus, by Lemma~\ref{lmm:lbo} and Fubini's theorem, it is easy to see that
	\begin{align}
	\mathbb E_0\Pi(\mathcal B|Y) \mathbbm 1_{\mathcal T_n}=\mathbb E_0\frac{\int_{\mathcal B} (f_\beta^n/f_0^n)(Y)\Pi(\beta)}{\int_{\mathbb R^d} (f_\beta^n/f_0^n)(Y)\Pi(\beta)}\mathbbm 1_{\mathcal T_n} \lesssim \frac{\max\{p, n^{\overline g}\}^{3s_0}}{\pi_p(s_0)}
	\int_{\mathcal B} e^{\lambda\lVert \beta_0 \rVert_{2,1}} \mathbb E_0 \frac{f_\beta^n}{f_0^n} \mathbbm 1_{\mathcal T_n}\Pi(\beta).
	\label{eqn:entbo}
	\end{align}
	Note  that the integral term on the right-most side is equal to
	\begin{align}
	\sum_{S:s> R} \frac{\pi_p(s)}{\binom{p}{s}} \left(\frac{\lambda}{\sqrt{\pi}}\right)^{d_S} \frac{\prod_{j\in S} \Gamma(g_j/2)}{2^s\prod_{j\in S} \Gamma(g_j)}\int_{\mathbb R^{d}} \frac{e^{-\lambda\lVert\beta\rVert_{2,1}}}{e^{-\lambda\lVert\beta_0\rVert_{2,1}}}\mathbb E_0  \frac{f_\beta^n}{f_0^n}(Y) \mathbbm 1_{\mathcal T_n} d\beta_S\otimes\delta(\beta_{S^c}) ,
	\label{eqn:intbo}
	\end{align}
	and by Lemma~\ref{lmm:lrbound} and Lemma~\ref{c3lmm:maxprob},
	\begin{align}	
	\log \frac{f_\beta^n}{f_0^n}(Y)\mathbbm 1_{\mathcal T_n}\le \frac{\lambda}{2} \lVert \beta-\beta_0 \rVert_{2,1} - \frac{(\beta-\beta_0)^T X^T W X (\beta-\beta_0) }{2+4\max_i\lVert X_i\rVert_\ast\lVert\beta-\beta_0\rVert_{2,1}}.
	\label{eqn:loglbo}
	\end{align}
	One can easily verify that
	\begin{align}	
	\begin{split}
	\lVert\beta_0\rVert_{2,1}-\lVert\beta\rVert_{2,1}+\frac{1}{2}\lVert\beta-\beta_0\rVert_{2,1} & = 
	\lVert\beta_0\rVert_{2,1}-\lVert\beta\rVert_{2,1}+\frac{1}{2}\lVert\beta_{S_0^c}\rVert_{2,1}+\frac{1}{2}\lVert\beta_{S_0}-\beta_{0,S_0}\rVert_{2,1}\\
	&\le -\frac{1}{2}\lVert\beta_{S_0^c}\rVert_{2,1}+\frac{3}{2}\lVert\beta_{S_0}-\beta_{0,S_0}\rVert_{2,1}.
	\end{split}
	\label{eqn:cal1}
	\end{align}
	If $7\lVert\beta_{S_0}-\beta_{0,S_0}\rVert_{2,1}\le \lVert \beta_{S_0^c} \rVert_{2,1}$, the rightmost side of \eqref{eqn:cal1} is equal to
	$-(1/2)\lVert\beta_{S_0^c}\rVert_{2,1}+(7/4)\lVert\beta_{S_0}-\beta_{0,S_0}\rVert_{2,1}-(1/4)\lVert\beta_{S_0}-\beta_{0,S_0}\rVert_{2,1}\le -(1/4)\lVert\beta-\beta_0\rVert_{2,1}$,
	allowing us to obtain from \eqref{eqn:loglbo} that
	\begin{align*}	
	\frac{e^{-\lambda\lVert\beta\rVert_{2,1}}}{e^{-\lambda\lVert\beta_0\rVert_{2,1}}}\mathbb E_0  \frac{f_\beta^n}{f_0^n}(Y) \mathbbm 1_{\mathcal T_n} 
	&\le\exp\left\{ -\frac{\lambda}{4}\lVert \beta-\beta_0 \rVert_{2,1} \right\}.
	\end{align*}
	If $7\lVert\beta_{S_0}-\beta_{0,S_0}\rVert_{2,1}> \lVert \beta_{S_0^c} \rVert_{2,1} $, since the leftmost side of \eqref{eqn:cal1} is bounded by $(3/2) \lVert\beta-\beta_0\rVert_{2,1}$, we obtain that
	\begin{align*}	
	&\frac{e^{-\lambda\lVert\beta\rVert_{2,1}}}{e^{-\lambda\lVert\beta_0\rVert_{2,1}}}\mathbb E_0  \frac{f_\beta^n}{f_0^n}(Y) \mathbbm 1_{\mathcal T_n}\le \exp\left\{\left(-\frac{\lambda}{4}+\frac{7\lambda}{4}\right)\lVert\beta-\beta_0\rVert_{2,1}- \frac{s_0^{-1}\lVert X\rVert_\ast^2 \lVert\beta-\beta_0\rVert_{2,1}^2 \phi^2(S_0)}{2+4\max_i\lVert X_i\rVert_\ast\lVert\beta-\beta_0\rVert_{2,1}}\right\}.
	\end{align*}
	We now make use of the following fact: for any $x>0,A>0,B>0,C>0$ such that $AC\le (1-\delta) B$ with $\delta\in(0,1)$, 
	\begin{align*}	
	Ax-\frac{Bx^2}{2+Cx}\le Ax-\frac{ABx^2}{2A+(1-\delta) Bx}\le \frac{2A^2x}{2A+(1-\delta) Bx}\le \frac{2A^2}{(1-\delta) B}.
	\end{align*}
	We therefore obtain that on $\mathcal B_1(M_1)$ for some $M_1>0$,
	\begin{align}	
	\frac{7\lambda}{4}\lVert\beta-\beta_0\rVert_{2,1}- \frac{s_0^{-1}\lVert X\rVert_\ast^2 \lVert\beta-\beta_0\rVert_{2,1}^2 \phi^2(S_0)}{2+4\max_i\lVert X_i\rVert_\ast\lVert\beta-\beta_0\rVert_{2,1}}\le \frac{99 s_0\max\{\log p,\overline g \log n\}}{\phi^2(S_0)}.
	\label{eqb:qqza}
	\end{align}
	Hence, for both cases ($7\lVert\beta_{S_0}-\beta_{0,S_0}\rVert_{2,1}\le \lVert \beta_{S_0^c} \rVert_{2,1}$ and $7\lVert\beta_{S_0}-\beta_{0,S_0}\rVert_{2,1} > \lVert \beta_{S_0^c} \rVert_{2,1}$), 
	\begin{align*}	
	\frac{e^{-\lambda\lVert\beta\rVert_{2,1}}}{e^{-\lambda\lVert\beta_0\rVert_{2,1}}}\mathbb E_0  \frac{f_\beta^n}{f_0^n}(Y) \mathbbm 1_{\mathcal T_n}
	\le \exp\left\{-\frac{\lambda}{4}\lVert\beta-\beta_0\rVert_{2,1}+\frac{99 s_0\max\{\log p,\overline g \log n\}}{\phi^2(S_0)}\right\}.
	\end{align*}
	Therefore, \eqref{eqn:intbo} is bounded by
	\begin{align*}
	&\exp\!\left\{\frac{99 s_0\max\{\log p,\overline g \log n\}}{\phi^2(S_0)} \right\}\!\sum_{S:s> R} \!\frac{\pi_p(s)}{\binom{p}{s}} \!\left(\frac{\lambda}{\sqrt{\pi}}\right)^{d_S} \!\frac{\prod_{j\in S} \Gamma(g_j/2)}{2^s\prod_{j\in S} \Gamma(g_j)}\int_{\mathbb R^{d_S}} \!e^{-(\lambda/4)\lVert\beta_S-\beta_{0,S}\rVert_{2,1} } d\beta_S .
	\end{align*}
	Directly evaluating the integral, the summation term becomes
	\begin{align*}
	\sum_{S:s> R} \frac{\pi_p(s)}{\binom{p}{s}} 4^{d_S}&\le \sum_{s=R+1}^p \pi_p(s)4^{s\overline g}\\
	&\le \pi_p(s_0) 4^{s_0\overline g} \left\{\frac{4^{\overline g}A_2}{\max\{p, n^{\overline g}\}^{A_4}}\right\}^{R+1-s_0} \sum_{j=0}^\infty \left\{\frac{4^{\overline g} A_2}{\max\{p, n^{\overline g}\}^{A_4}}\right\}^j.
	\end{align*}
	The series term is bounded for sufficiently large $n$. Hence, we see from \eqref{eqn:entbo} that $	\mathbb E_0\Pi(\mathcal B|Y) \mathbbm 1_{\mathcal T_n}$ is bounded by a constant multiple of
	\begin{align*}
	\exp\bigg\{&\left(3+\frac{99}{\phi^2(S_0)}\right) s_0\max\{\log p,\overline g \log n\} \\
	&+ (R+1-s_0)(\overline g\log 4+\log A_2-A_4\max\{\log p,\overline g \log n\})\bigg\} .
	\end{align*}
	Choosing $R=s_0+M_2A_4^{-1}\{1+33/\phi^2(S_0)\}s_0$ for any $M_2>3$ allows the assertion to be verified.
\end{proof}

\begin{proof}[Proof of Theorem~\ref{c3thm:contraction}]
	Let $\mathcal T_n$ be the event in Lemma~\ref{c3lmm:maxprob} and define $\mathcal B=\{ \beta: s_\beta> \xi_0 , \lVert W^{1/2} X(\beta-\beta_0) \rVert_2 > R \}$ for some $R\ge 0$ to be specified later.
	The boundedness condition on $\mathcal B_2(M)$ is stronger than that in Theorem~\ref{c3thm:dimen}.
	Hence, by Theorem~\ref{c3thm:dimen} and Lemma~\ref{c3lmm:maxprob}, it suffices to show that $\mathbb E_0\Pi(\mathcal B|Y) \mathbbm 1_{\mathcal T_n}$ tends to zero uniformly over the set given in the theorem for some appropriately chosen $R$.
	Observe that as in the proof of Theorem~\ref{c3thm:dimen}, the condition $d_0\le n$ is satisfied on $\mathcal B_2$, meaning that we can apply Lemma~\ref{lmm:lbo}.	
	Using the calculations in \eqref{eqn:entbo} and \eqref{eqn:intbo}, it is easy to see that $\mathbb E_0\Pi(\mathcal B|Y) \mathbbm 1_{\mathcal T_n}$ is bounded by a constant multiple of
	\begin{align*}
	\frac{\max\{p, n^{\overline g}\}^{3s_0}}{\pi_p(s_0)}\sum_{S:s> \xi_0 } \frac{\pi_p(s)}{\binom{p}{s}} \left(\frac{\lambda}{\sqrt{\pi}}\right)^{d_S} \frac{\prod_{j\in S} \Gamma(g_j/2)}{2^s\prod_{j\in S} \Gamma(g_j)}\int_{\mathcal B} \frac{e^{-\lambda\lVert\beta\rVert_{2,1}}}{e^{-\lambda\lVert\beta_0\rVert_{2,1}}}\mathbb E_0  \frac{f_\beta^n}{f_0^n}(Y) \mathbbm 1_{\mathcal T_n} d\beta_S\otimes\delta(\beta_{S^c}) .
	\end{align*}
	Using \eqref{eqn:loglbo}, we obtain that
	\begin{align*}	
	&\frac{e^{-\lambda\lVert\beta\rVert_{2,1}}}{e^{-\lambda\lVert\beta_0\rVert_{2,1}}}\mathbb E_0  \frac{f_\beta^n}{f_0^n}(Y) \mathbbm 1_{\mathcal T_n}\le \exp\Bigg\{\left(-\lambda+\frac{5\lambda}{2}\right)\lVert\beta-\beta_0\rVert_{2,1}- \frac{(\beta-\beta_0)^T X^T W X (\beta-\beta_0)}{2+4\max_i\lVert X_i\rVert_\ast\lVert\beta-\beta_0\rVert_{2,1}}\Bigg\},
	\end{align*}
	since the leftmost side of \eqref{eqn:cal1} is bounded by $(3/2) \lVert\beta-\beta_0\rVert_{2,1}$. Observe that by the definition of $\psi_1$, the exponent in the last expression is bounded by
	\begin{align*}	
	&-\lambda\lVert\beta-\beta_0\rVert_{2,1}+\left(-\lambda+\frac{7\lambda}{2}\right) \frac{ \sqrt{\xi_0+s_0} \lVert W^{1/2} X (\beta-\beta_0)\rVert_2}{\lVert X \rVert_\ast\psi_1(\xi_0+s_0)} \\
	&\quad- \lVert W^{1/2} X (\beta-\beta_0)\rVert_2^2 \bigg/ \left\{2+\frac{4\sqrt{\xi_0+s_0}\max_i\lVert X_i\rVert_\ast\lVert W^{1/2} X (\beta-\beta_0)\rVert_2 }{\lVert X \rVert_\ast\psi_1(\xi_0+s_0)}\right\}.
	\end{align*}
	As in \eqref{eqb:qqza}, there exists a constant $C>0$ such that on $\mathcal B_2(M_3)$ for some $M_3>0$, the last expression is bounded by
	\begin{align*}	
	-\lambda\lVert\beta-\beta_0\rVert_{2,1}-\frac{ \lambda \sqrt{\xi_0+s_0} \lVert W^{1/2} X (\beta-\beta_0)\rVert_2}{\lVert X \rVert_\ast\psi_1(\xi_0+s_0)}+ \frac{C (\xi_0+s_0)\max\{\log p,\overline g \log n\}}{\psi_1^2(\xi_0+s_0)}.
	\end{align*}
	Making use of this bound, we see that
	\begin{align*}	
	\mathbb E_0\Pi(\mathcal B|Y) \mathbbm 1_{\mathcal T_n}&\lesssim \frac{\max\{p, n^{\overline g}\}^{3s_0}}{\pi_p(s_0)}\exp\left\{-\frac{ \lambda \sqrt{\xi_0+s_0} R }{\lVert X \rVert_\ast\psi_1(\xi_0+s_0)}+ \frac{C (\xi_0+s_0)\max\{\log p,\overline g \log n\}}{\psi_1^2(\xi_0+s_0)}\right\} \\
	&\quad\times\sum_{S:s> \xi_0} \frac{\pi_p(s)}{\binom{p}{s}}\left(\frac{\lambda}{\sqrt{\pi}}\right)^{d_S} \frac{\prod_{j\in S} \Gamma(g_j/2)}{2^s\prod_{j\in S} \Gamma(g_j)} \int_{\mathbb R^{d_S}} e^{-\lambda\lVert\beta_S-\beta_{0,S}\rVert_{2,1} } d\beta_S.
	\end{align*}
	The summation term is bounded by 1. 
 It can be shown that $\psi_1(\xi_0+s_0)\le 1$ by plugging in the unit vector and noting that $\lVert W\rVert_{\rm sp}\le 1$  (see the proof of Lemma~\ref{lmm:lrbound}).
	Choose $R=M_4 \sqrt{(\xi_0+s_0)\max\{\log p,\overline g \log n\}}/ \psi_1(\xi_0+s_0)$ for a large enough $M_4>0$.
	Since $\pi(s_0)\ge A_1^{s_0} \max\{p, n^{\overline g}\}^{-A_3s_0}\pi(0)\gtrsim A_1^{s_0} \max\{p, n^{\overline g}\}^{-A_3s_0}$, the first assertion holds if $M_4$ is suitably large. The second and third assertions hold directly by the definitions of $\psi_1$ and $\psi_2$.
\end{proof}

\section{Discussion}
\label{sec:disc}

This paper studies the posterior contraction rates of high-dimensional logit models under group sparsity. Whereas many existing studies on nonlinear models impose some size restrictions on the true regression coefficients, we do not impose such restrictions since they are particularly undesirable in high-dimensional scenarios. Other Bayesian asymptotic properties, such as the Bernstein-von Mises theorem and selection consistency, are also of interest in the high-dimensional regression setups. Unlike for the linear regression models in \citet{castillo2015bayesian}, establishing those properties with our prior is not straightforward due to the restricted range of $\lambda$. \citet{narisetty2019skinny} and \citet{lee2020bayesian} studied selection consistency in Bayesian high-dimensional logit models, though these studies were restricted to binary logistic regression under individual sparsity. Their approaches, however, require direct size restrictions on the regression coefficients. Characterizing additional Bayesian asymptotic properties without such restrictions is an interesting topic for future research.

There is at least one limitation regarding our results, namely, the obtained rates can be deemed suboptimal in the worst-case scenario. Consider a situation in which one group $j_\ast\in\{1,\dots,p\}$, whose corresponding coefficients are zero, grows much faster than other groups such that $\overline g=g_{j_\ast}$ and $g_j=o(g_{j_\ast})$, $j\ne j_\ast$.  Because this group is assumed to be inactive, i.e., $j_\ast\notin S_0$, it is preferable that this group does not change our rates. However, the rates blow up since they are dependent on $\overline g$. Generally, we are more interested in the well-balanced case in which all $g_i$ behave similarly.

\section*{Acknowledgment}
This research was supported by the Yonsei University Research Fund of 2021-22-0032.

\appendix
\section{Appendix: Auxiliary results}

\subsection{Asymptotic behavior of $\lVert X\rVert_\ast$}

\begin{lemma}
	Suppose that each row of $X\in\mathbb R^{n(m-1)\times p}$ is an independent sub-Gaussian vector.
	If $\log p=o(n)$ and $\overline g=o(n)$, then
	$\lVert X\rVert_\ast \asymp \sqrt{n}$ with probability tending to one.
	\label{lmm:ratex}
\end{lemma}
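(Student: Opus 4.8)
The plan is to bound the spectral norm of each group submatrix $X_{\cdot j}$ separately and then combine the $p$ estimates by a union bound. Writing $N=n(m-1)$, each $X_{\cdot j}$ is an $N\times g_j$ matrix whose rows are the restrictions of the independent sub-Gaussian rows of $X$ to the coordinates in $G_j$; hence these rows are themselves independent sub-Gaussian vectors in $\mathbb R^{g_j}$, with covariance equal to the corresponding principal submatrix of the common row covariance. Since $m$ is fixed we have $N\asymp n$, so the target $\lVert X\rVert_\ast=\max_{1\le j\le p}\lVert X_{\cdot j}\rVert_{\rm sp}\asymp\sqrt n$ will follow once I show that each $\lVert X_{\cdot j}\rVert_{\rm sp}$ concentrates around $\sqrt N$ uniformly in $j$.

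For the upper bound I would invoke the standard deviation inequality for matrices with independent sub-Gaussian rows, obtained through an $\varepsilon$-net argument on the unit sphere of $\mathbb R^{g_j}$ together with a Bernstein bound for the sub-exponential quadratic forms $\langle X_{\cdot j}^T u, u\rangle$. This yields, for each fixed $j$ and every $t\ge0$, a bound of the form $\lVert X_{\cdot j}\rVert_{\rm sp}\le \sqrt N + C(\sqrt{g_j}+t)$ holding with probability at least $1-2e^{-ct^2}$, where $C,c>0$ depend only on the sub-Gaussian norm of the rows. Taking $t\asymp\sqrt{\log p}$ with a sufficiently large constant makes the failure probability $2p\,e^{-ct^2}=o(1)$ after a union bound over the $p$ groups, so that simultaneously $\max_{1\le j\le p}\lVert X_{\cdot j}\rVert_{\rm sp}\le\sqrt N+C(\sqrt{\overline g}+\sqrt{\log p})$. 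The assumptions $\overline g=o(n)$ and $\log p=o(n)$ give $\sqrt{\overline g}+\sqrt{\log p}=o(\sqrt n)$, and combined with $\sqrt N\asymp\sqrt n$ this delivers $\lVert X\rVert_\ast\lesssim\sqrt n$ with probability tending to one.

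The matching lower bound is the easier half: it suffices to lower bound the spectral norm of a single group, and since the spectral norm dominates the Euclidean norm of any column, $\lVert X\rVert_\ast\ge\lVert X_{\cdot 1}\rVert_{\rm sp}\ge\lVert (\text{any fixed column})\rVert_2$, which concentrates around $\sqrt N\asymp\sqrt n$ by the sub-Gaussian concentration of the squared norm $\sum_{i=1}^N X_{ik}^2$. Here I use that the per-coordinate variances are bounded away from zero, i.e.\ the row covariance is non-degenerate; this non-degeneracy is exactly what makes the two-sided $\asymp$ legitimate rather than merely the upper bound, whereas the upper bound needs only the boundedness of the sub-Gaussian norm. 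The main obstacle is the simultaneous control over all $p$ groups when both $p$ and the group sizes $g_j$ are allowed to grow with $n$: the union bound is affordable only because the per-group concentration has a genuine Gaussian tail in $t$, which converts the union-bound cost into an additive $\sqrt{\log p}$ correction, and the two hypotheses $\log p=o(n)$ and $\overline g=o(n)$ are precisely calibrated so that both the $\sqrt{\log p}$ and the $\sqrt{\overline g}$ corrections are negligible against the dominant $\sqrt n$ term. Were either quantity of order $n$, these corrections would inflate the estimate and the clean order $\sqrt n$ would be lost.
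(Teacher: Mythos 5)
Your proposal is correct, and its core --- the per-group deviation inequality $\lVert X_{\cdot j}\rVert_{\rm sp}\le\sqrt{n(m-1)}+C(\sqrt{g_j}+t)$ with Gaussian tail in $t$, followed by a union bound over the $p$ groups with $t\asymp\sqrt{\log p}$ --- is exactly the paper's argument; the paper simply cites Theorem 5.39 of Vershynin (2012) for that inequality rather than re-deriving it via an $\varepsilon$-net, and takes $t=2\sqrt{(\log p)/C_2}$ so that the union-bound cost $p\,e^{-C_2t^2/2}$ vanishes. Where you genuinely differ is the lower bound. The paper reads it off the same citation: that theorem is two-sided, so with $t=\sqrt{n}/2$ the smallest singular value of a single group already satisfies $\sigma_{\min}(X_{\cdot j})\ge\sqrt{n(m-1)}-C_1\sqrt{g_j}-\sqrt{n}/2\gtrsim\sqrt{n}$ because $\overline g=o(n)$. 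You instead bound the spectral norm below by the Euclidean norm of a single column and apply Bernstein-type concentration to the sub-exponential sum $\sum_i X_{ik}^2$. Both routes work; the paper's is a one-liner since one citation delivers both tails, while yours is more elementary and --- importantly --- makes explicit a hypothesis the paper leaves implicit: Vershynin's theorem assumes isotropic rows, and the lemma's bare phrase ``sub-Gaussian vector'' does not by itself exclude a degenerate row covariance, under which the lower bound $\lVert X\rVert_\ast\gtrsim\sqrt{n}$ would be false (the upper bound, as you note, needs only a uniform bound on the sub-Gaussian norms). Your caveat that the two-sided $\asymp$ requires per-coordinate variances bounded away from zero is exactly the right one, and is a point on which your write-up is more careful than the paper's.
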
	
\begin{proof}
	Observe that by Theorem 5.39 of \citet{vershynin2012introduction}, there exist constants $C_1>0$ and $C_2>0$ such that for any $t>0$, 
	\begin{align*}
	\mathbb P\left\{ \sigma_{\min}(X_{\cdot j})\le \sqrt{n(m-1)}-C_1\sqrt{g_i}-t \right\}&\le e^{-C_2t^2/2},\\
	\mathbb P\left\{\sigma_{\max}(X_{\cdot j})\ge \sqrt{n(m-1)}+C_1\sqrt{g_i}+t\right\}&\le e^{-C_2t^2/2},
	\end{align*} 
	where $\sigma_{\min}(X_{\cdot j})$ and $\sigma_{\max}(X_{\cdot j})$ are the smallest and largest singular values of $X_{\cdot j}$, respectively. Choosing $t=\sqrt{n}/2$, the first line of the display verifies $\lVert X \rVert_\ast\gtrsim \sqrt{n}$ with high probability since $\overline g=o(n)$. Now, observe that
	\begin{align*}
	\mathbb P\left\{\lVert X \rVert_\ast\ge\sqrt{n(m-1)}+C_1\sqrt{\overline g}+t\right\}&\le \sum_{j=1}^p\mathbb  P\left\{\sigma_{\max}(X_{\cdot j})\ge\sqrt{n(m-1)}+C_1\sqrt{g_i}+t\right\}\\
	&\le pe^{-C_2 t^2/2}.
	\end{align*} 
	Choose $t=2\sqrt{(\log p)/C_2}$. Since $\log p=o(n)$ and $\overline g=o(n)$, we have that $\lVert X \rVert_\ast\lesssim \sqrt{n}$ with high probability.
\end{proof}

\subsection{Self-concordant property of multi-category logit models}

\begin{lemma}
	For any $v=(v_1,\dots,v_{m-1})^T\in\mathbb{R}^{m-1}$ and $w=(w_1,\dots,\allowbreak w_{m-1})^T\in\mathbb{R}^{m-1}$, the function $\eta:\mathbb R\mapsto \mathbb R$ defined by $\eta(t)=\log (1+{\sf exp}(w+tv)^T 1_{m-1})$ satisfies $|\eta'''(t)|\le 4\lVert v\rVert_2 \eta''(t)$ for every $t\in\mathbb R$.
	\label{c3lmm:selfcon}
\end{lemma}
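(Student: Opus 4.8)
The plan is to prove the self-concordance bound $|\eta'''(t)|\le 4\lVert v\rVert_2\,\eta''(t)$ by direct computation of the first three derivatives of $\eta(t)=\log(1+{\sf exp}(w+tv)^T 1_{m-1})$, recognizing that these derivatives are exactly the cumulants (mean, variance, third central moment) of a scalar linear combination of a multinomial-type random vector, and then controlling the third cumulant by the second. First I would introduce the probability weights $p_\ell(t) = e^{w_\ell + t v_\ell}/(1 + \sum_{k} e^{w_k + t v_k})$ for $\ell = 1,\dots,m-1$, together with the reference weight $p_0(t) = 1/(1 + \sum_k e^{w_k + t v_k})$, so that $\sum_{\ell=0}^{m-1} p_\ell(t) = 1$. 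With this notation $\eta'(t) = \sum_{\ell=1}^{m-1} p_\ell(t) v_\ell = \mathbb{E}[V]$, where $V$ denotes the random variable taking value $v_\ell$ with probability $p_\ell$ (and value $0=:v_0$ with probability $p_0$). The key observation is that $\eta$ is the cumulant generating function of $V$ in the direction $t$, so that $\eta''(t) = \mathrm{Var}(V) = \mathbb{E}[(V-\mathbb{E}V)^2]$ and $\eta'''(t) = \mathbb{E}[(V-\mathbb{E}V)^3]$, the second and third central moments.

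The main step is then a moment inequality: I would bound the third central moment by the second. Writing $\bar V = V - \mathbb{E}V$, we have $|\eta'''(t)| = |\mathbb{E}[\bar V^3]| \le \mathbb{E}[|\bar V|\cdot \bar V^2] \le \big(\max_\ell |v_\ell - \mathbb{E}V|\big)\,\mathbb{E}[\bar V^2] = \big(\max_\ell |v_\ell - \mathbb{E}V|\big)\,\eta''(t)$. It therefore suffices to show $\max_\ell |v_\ell - \mathbb{E}V| \le 4\lVert v\rVert_2$, which I would argue crudely: both $|v_\ell|$ (including $v_0 = 0$) and $|\mathbb{E}V| = |\sum_\ell p_\ell v_\ell|$ are bounded by $\lVert v\rVert_\infty \le \lVert v\rVert_2$, since $\mathbb{E}V$ is a convex combination of the $v_\ell$ and $v_0=0$; hence $\max_\ell |v_\ell - \mathbb{E}V| \le 2\lVert v\rVert_2 \le 4\lVert v\rVert_2$, with room to spare. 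This delivers the claimed constant.

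To make the cumulant-generating-function interpretation rigorous and thereby avoid a painful direct triple differentiation of a ratio of exponential sums, I would set $\phi(t) = \log\big(\sum_{\ell=0}^{m-1} e^{w_\ell + t v_\ell}\big)$ with $w_0 := 0$, noting that $\phi(t) = \eta(t)$ because $e^{w_0+tv_0}=1$ and $w_0=v_0=0$. Then $\phi$ is manifestly the cumulant generating function of the finite discrete distribution assigning mass $\propto e^{w_\ell}$ to the point $v_\ell$, and standard facts give $\phi' = \mathbb{E}V$, $\phi'' = \mathrm{Var}(V) \ge 0$, and $\phi''' = $ third central moment. This reframing is what makes the derivative identities transparent; the only genuine calculation is verifying these standard cumulant formulas, which I would either cite or confirm by a short differentiation of $\phi$.

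The step I expect to be the main obstacle is not the moment inequality itself, which is elementary, but rather ensuring the bookkeeping is clean when folding in the reference category $0$: the vector $v$ lives in $\mathbb{R}^{m-1}$ but the underlying distribution has $m$ atoms (the $m-1$ categories plus the reference), so one must be careful that the reference contributes an atom at $v_0 = 0$ with the correct weight and that $\lVert v\rVert_2$ (a norm over the $m-1$ coordinates only) still dominates $\max_\ell|v_\ell - \mathbb{E}V|$. Since $v_0=0$ is already covered by $\lVert v\rVert_2 \ge 0$ and the convex-combination bound on $\mathbb{E}V$ holds regardless, this is a minor technicality rather than a real difficulty, and the constant $4$ is comfortably loose.
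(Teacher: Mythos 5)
Your proposal is correct, and it takes a genuinely different route from the paper. The paper proceeds by brute-force differentiation: it writes out $e^{\eta(t)}\eta'(t)$ and $e^{2\eta(t)}\eta''(t)$ explicitly, observes that the latter is a nonnegative combination of the terms $v_j^2 e^{w_j+tv_j}$ and $(v_j-v_k)^2 e^{w_j+w_k+t(v_j+v_k)}$, differentiates that identity once more, and bounds the resulting terms using $|v_j|\le \lVert v\rVert_2$, $|v_j+v_k|\le 2\lVert v\rVert_2$, and $|\eta'(t)|\le\lVert v\rVert_2$, which is where the constant $4$ comes from ($2|\eta'|$ plus the $2\lVert v\rVert_2$ term). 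You instead exploit the fact that $\eta$ is, up to an additive constant, the cumulant generating function of the discrete random variable $V$ supported on $\{0,v_1,\dots,v_{m-1}\}$, so that $\eta''(t)$ and $\eta'''(t)$ are the variance and third central moment of $V$ under the exponentially tilted weights $p_\ell(t)$; the bound then reduces to the elementary inequality $|\mathbb{E}[\bar V^3]|\le\max_\ell|v_\ell-\mathbb{E}V|\cdot\mathrm{Var}(V)$, and the convex-combination bound $|\mathbb{E}V|\le\lVert v\rVert_\infty$ closes the argument. One caveat on phrasing: the ``standard facts'' in your third paragraph must be applied to the $t$-tilted distribution, not the base distribution with mass $\propto e^{w_\ell}$ (derivatives of a CGF at $t\ne 0$ are cumulants of the tilted law); since your $p_\ell(t)$ in the first paragraph are exactly these tilted weights, your computation is consistent, but the wording should make this explicit. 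Your approach buys conceptual clarity (nonnegativity of $\eta''$ is immediate since it is a variance, whereas the paper needs its sum-of-squares representation to see this), avoids the third differentiation entirely, and in fact yields the sharper constant $2\lVert v\rVert_2$; the paper's approach buys self-containedness, requiring nothing beyond calculus and term-by-term estimates.
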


\begin{proof}
	By direct calculations, one obtains that
	\begin{align*}
	e^{\eta(t)}\eta'(t) &= \sum_{j=1}^{m-1} v_j e^{w_j+tv_j },\\
	e^{2\eta(t)}\eta''(t) &=\sum_{j=1}^{m-1} v_j^2 e^{w_j+tv_j }+\sum_{j < k} e^{w_j+w_k+t(v_j+v_k)} (v_j-v_k)^2.
	\end{align*} 
	Since $	e^{2\eta(t)}\eta''(t)\ge0$, differentiating both sides of the second line,	
	\begin{align*}
	e^{2\eta(t)}|\eta'''(t)| 
	&\le 2|\eta'(t)|e^{2\eta(t)}\eta''(t)  +\sum_{j=1}^{m-1} |v_j|^3 e^{w_j+tv_j } +\sum_{j < k} e^{w_j+w_k+t(v_j+v_k)} |v_j+v_k|(v_j-v_k)^2\\
	&\le 2|\eta'(t)| e^{2\eta(t)}\eta''(t)  +  	2\lVert v \rVert_2 e^{2\eta(t)}\eta''(t).
	\end{align*} 
	The assertion follows from the display by plugging in the bound
	\begin{align*}
	|\eta'(t)| = \frac{ |\sum_{j=1}^{m-1} v_j e^{w_j+tv_j }|}{ 1+ \sum_{j=1}^{m-1} e^{w_j+tv_j }}\le \lVert v \rVert_2.
	\end{align*}
\end{proof}

\subsection{On quadratic forms of bounded random vectors}


\begin{lemma}
	Let $(Z_j\in \mathbb{R}^{r_j})_{j=1}^n$ be a sequence of independent random vectors such that for every $j\le n$, $\mathbb{E} Z_j = 0$ and $\mathbb{P}\{Z_j\in{\rm supp}(Z_j)\}=1$ for a bounded support ${\rm supp}(Z_j)$ of $Z_j$ (note that for every $j\le n$, the entries in $Z_j$ need not be independent). Let $Z=(Z_1^T,\dots, Z_n^T)^T$. Then for any real positive semidefinite matrix $Q$, we have
	\begin{align*}
	\mathbb{E}\exp\left\{ t Z^T Q Z \right\}\le \exp\left\{ \frac{t \max_{j}\bar b_j^2 {\rm tr}(Q)  }{1-2t \max_j \tilde b_j^2 \lVert Q\rVert_{\rm sp}}  \right\},\quad 0<t< \frac{1}{2 \max_j \tilde b_j^2 \lVert Q\rVert_{\rm sp} },
	\end{align*}
	where for every $j\le n$,
	\begin{align*}
	\bar b_j=\max_{\xi_j\in{\rm supp}(Z_j)} \lVert \xi_j\rVert_2,\quad \tilde b_j=\max_{\xi_j,\xi_j'\in{\rm supp}(Z_j)} \lVert \xi_j-\xi_j'\rVert_2.
	\end{align*}
	\label{c3lmm:quad}
\end{lemma}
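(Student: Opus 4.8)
The plan is to linearize the quadratic form with an auxiliary Gaussian vector so that the block independence of $Z$ can be exploited directly. Since $Q$ is positive semidefinite, $Z^T Q Z=\lVert Q^{1/2}Z\rVert_2^2\ge 0$, and for any deterministic $w$ one has the Gaussian identity $\exp\{\tfrac12\lVert w\rVert_2^2\}=\mathbb{E}_g\exp\{g^T w\}$ with $g\sim N(0,I_D)$, $D=\sum_j r_j$. Applying this with $w=\sqrt{2t}\,Q^{1/2}Z$ and swapping the two expectations (justified by Tonelli, since the integrand is nonnegative) would give $\mathbb{E}\exp\{tZ^T Q Z\}=\mathbb{E}_g\,\mathbb{E}_Z\exp\{a^T Z\}$, where $a=\sqrt{2t}\,Q^{1/2}g$ and $a_j$ denotes the block of $a$ conformable with $Z_j$.

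Next I would use independence of the blocks to factor $\mathbb{E}_Z\exp\{a^T Z\}=\prod_{j=1}^n\mathbb{E}_{Z_j}\exp\{a_j^T Z_j\}$. For each $j$, the scalar $a_j^T Z_j$ is mean zero and, being supported on $\{a_j^T\xi_j:\xi_j\in{\rm supp}(Z_j)\}$, has almost-sure range at most $\lVert a_j\rVert_2\,\tilde b_j$. Hoeffding's lemma then yields $\mathbb{E}_{Z_j}\exp\{a_j^T Z_j\}\le\exp\{\tfrac18\lVert a_j\rVert_2^2\,\tilde b_j^2\}$, and multiplying over $j$ together with $\lVert a\rVert_2^2=2t\,g^T Q g$ gives $\mathbb{E}_Z\exp\{a^T Z\}\le\exp\{\tfrac14 t\,(\max_j\tilde b_j^2)\,g^T Q g\}$. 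It then remains to evaluate the resulting Gaussian chaos: with $s=\tfrac14 t\max_j\tilde b_j^2$ and eigenvalues $\lambda_l$ of $Q$, $\mathbb{E}_g\exp\{s\,g^T Q g\}=\prod_l(1-2s\lambda_l)^{-1/2}$ for $s<1/(2\lVert Q\rVert_{\rm sp})$, and the elementary bound $-\tfrac12\log(1-x)\le x/\{2(1-x)\}$ gives $\mathbb{E}_g\exp\{s\,g^T Q g\}\le\exp\{s\,{\rm tr}(Q)/(1-2s\lVert Q\rVert_{\rm sp})\}$.

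Finally I would reconcile the constants with the stated form. Since $\tilde b_j\le 2\bar b_j$, one has $s\,{\rm tr}(Q)=\tfrac14 t(\max_j\tilde b_j^2){\rm tr}(Q)\le t(\max_j\bar b_j^2){\rm tr}(Q)$, while $1-2s\lVert Q\rVert_{\rm sp}=1-\tfrac12 t(\max_j\tilde b_j^2)\lVert Q\rVert_{\rm sp}\ge 1-2t(\max_j\tilde b_j^2)\lVert Q\rVert_{\rm sp}$; on the stated range $t<1/(2\max_j\tilde b_j^2\lVert Q\rVert_{\rm sp})$ both denominators are positive, so the exponent is bounded by $t(\max_j\bar b_j^2){\rm tr}(Q)/\{1-2t(\max_j\tilde b_j^2)\lVert Q\rVert_{\rm sp}\}$, which is exactly the claimed bound.

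The main obstacle I anticipate lies in the per-block step: unlike the scalar arguments available under individual sparsity (normal tails or Hoeffding for a single coordinate, as in \citet{castillo2015bayesian,atchade2017contraction}), here each $Z_j$ is a bounded mean-zero \emph{vector}, and the cleanest route to a usable moment generating bound is to pass to the scalar projection $a_j^T Z_j$ and track its support diameter $\tilde b_j$. A subtlety worth flagging is that the trace term is naturally governed by the second moment of $Z_j$ (hence by the sup-norm $\bar b_j$ through $\lVert\mathbb{E} Z_j Z_j^T\rVert_{\rm sp}\le\bar b_j^2$), whereas the spectral-norm term is governed by the range $\tilde b_j$; the Gaussian linearization couples these through a single Hoeffding constant, so matching the stated split relies either on the mild inequality $\tilde b_j\le 2\bar b_j$ used above or, if a genuinely tight separation is wanted, on a variance-aware Bernstein refinement of the per-block estimate.
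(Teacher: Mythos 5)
Your proof is correct, and it takes a genuinely different route from the paper's. The paper splits $Z^TQZ$ into its diagonal blocks (bounded deterministically by $\max_j\bar b_j^2\,{\rm tr}(Q)$) and off-diagonal blocks, handles the latter with the decoupling inequality of \citet{de2012decoupling} plus a conditional application of Hoeffding's lemma, and then closes with a self-bounding step: after the bound $\sum_k\lVert Q_{\cdot k}^TZ\rVert_2^2=Z^TQ^2Z\le\lVert Q\rVert_{\rm sp}Z^TQZ$, Jensen's inequality makes the target moment generating function reappear on the right-hand side raised to the power $2t\max_k\tilde b_k^2\lVert Q\rVert_{\rm sp}<1$, and solving for it yields the stated denominator. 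You instead linearize the quadratic form with an auxiliary Gaussian vector, factor over the independent blocks, apply Hoeffding's lemma blockwise, and finish with the exact chi-square moment generating function of the Gaussian chaos $g^TQg$ together with the elementary bound $-\tfrac12\log(1-x)\le x/\{2(1-x)\}$ --- essentially the strategy of \citet{hsu2012tail} adapted to bounded mean-zero blocks, which is exactly the family of references the paper mentions as an alternative. Your route is more self-contained (no decoupling theorem, no self-bounding/fixed-point step), and in fact your intermediate bound is stronger than required: the exponent $\tfrac14 t(\max_j\tilde b_j^2)\,{\rm tr}(Q)/\{1-\tfrac12 t(\max_j\tilde b_j^2)\lVert Q\rVert_{\rm sp}\}$ is valid on the wider range $t<2/(\max_j\tilde b_j^2\lVert Q\rVert_{\rm sp})$, and since $\tfrac14\tilde b_j^2\le\bar b_j^2$ it dominates the stated bound, so the final concern you raise about needing a variance-aware Bernstein refinement is moot --- the crude inequality $\tilde b_j\le2\bar b_j$ loses nothing relative to the paper's constants. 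What the paper's decoupling route buys in exchange is that the trace term is tied to $\bar b_j$ directly (the diagonal blocks never pass through Hoeffding), which is why its constants naturally come out in the stated $\bar b_j$/$\tilde b_j$ split, whereas yours couples both terms to $\tilde b_j$ and recovers the split afterwards; your final reconciliation (numerator bounded above, denominator bounded below, both positive on the stated range of $t$) is handled correctly.
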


\begin{proof}
	We first write $Z^T Q Z = \sum_{1\le j,k\le n} Z_j^T Q_{jk} Z_k$ using the submatrices $Q_{jk}\in \mathbb{R}^{r_j\times r_k}$, $j,k\in\{1,\dots, n\}$, such that 
	\begin{align*} Q=\begin{pmatrix} 
	Q_{11} & \dots & Q_{1n} \\
	\vdots & \ddots & \vdots \\
	Q_{n1} & \dots & Q_{nn} 
	\end{pmatrix}.
	\end{align*}
	Now, observe that
	\begin{align}
	\begin{split}
	\mathbb{E}\exp\left\{ t Z^T Q Z \right\} &= \mathbb{E}\exp\left\{ t \sum_{j=1}^n Z_j ^T Q_{jj} Z_j + t\sum_{j\ne k} Z_j^T Q_{jk}  Z_k  \right\}\\ 
	&\le\exp\left\{ t   \max_{1\le j\le n} \bar b_j^2  {\rm tr}(Q)  \right\} \mathbb{E}\exp\left\{t\sum_{j\ne k}  Z_j^T Q_{jk}  Z_k  \right\},
	\label{c3eqn:lmmim0}
	\end{split}
	\end{align}
	since $ \sum_{j=1}^n \lVert Q_{jj}\rVert_{\rm sp}\le \sum_{j=1}^n {\rm tr}( Q_{jj})= {\rm tr}(Q)$ by its positive semidefiniteness.
	Using the decoupling inequality in Theorem 3.1.1 of \citet{de2012decoupling}, we obtain that 
	\begin{align*}
	\mathbb{E}\exp\left\{t\sum_{j\ne k} Z_j^T  Q_{jk} Z_k  \right\}\le \mathbb{E}\exp\left\{4t\sum_{j=1}^n\sum_{k=1}^n Z_j^T  Q_{jk} \tilde Z_k  \right\},
	\end{align*}
	where $\tilde Z=(\tilde Z_1^T,\dots,\tilde Z_n^T)^T$ is an independent copy of $Z$.
	It is clear that the right-hand side of the display is equal to
	\begin{align}
	\mathbb{E}\mathbb{E}\left[\exp\left\{4t \sum_{k=1}^n \sum_{j=1}^n  Z_j^T Q_{jk} \tilde Z_k  \right\}\Bigg| Z\right]=\mathbb{E}\prod_{k=1}^n\mathbb{E}\left[\exp\left(4t Z^T Q_{\cdot k}  \tilde Z_k \right)\big| Z\right],
	\label{c3eqn:lmmim}
	\end{align}
	where $Q_{\cdot k}=(Q_{1 k}^T ,\dots, Q_{n k}^T)^T\in \mathbb{R}^{ n\times r_k}$.
	Since
	\begin{align*}
	\max_{\xi_k\in{\rm supp}(Z_k)} Z^T Q_{\cdot k}  \xi_k - \min_{\xi_k\in{\rm supp}(Z_k)}Z^T Q_{\cdot k}\xi_k &= \max_{\xi_k,\xi_k'\in{\rm supp}(Z_k)} Z^T Q_{\cdot k} ( \xi_k-\xi_k')\\
	&\le  \left\lVert Q_{\cdot k}^T Z \right\rVert_2 \tilde b_k,
	\end{align*}
	applying Hoeffding's lemma to the inner expectation,
	we bound \eqref{c3eqn:lmmim} by
	\begin{align}
	\mathbb{E}\prod_{k=1}^n\exp\left\{2t^2 \left\lVert Q_{\cdot k}^T Z \right\rVert_2^2 \tilde b_k^2 \right\}\le \mathbb{E}\exp\left\{2t^2 \max_{1\le k\le n}\tilde b_k^2 \sum_{k=1}^n \left\lVert Q_{\cdot k}^T Z \right\rVert_2^2 \right\}.
	\label{c3eqn:lmmim2}
	\end{align}
	Since we have that by the symmetry of $Q$,
	\begin{align*}
	\sum_{k=1}^n \left\lVert Q_{\cdot k}^T Z \right\rVert_2^2 = Z^T\left\{\sum_{k=1}^n  Q _{\cdot k} Q _{\cdot k}^T\right\} Z  = Z^T Q^2 Z,
	\end{align*}
	the right-hand side of \eqref{c3eqn:lmmim2} is bounded by
	\begin{align*}
	\mathbb{E}\exp\left\{2t^2 \max_{1\le k\le n}\tilde b_k^2 Z^T Q^{1/2}Q Q^{1/2} Z\right\}\le  \mathbb{E}\exp\left\{2t^2 \max_{1\le k\le n}\tilde b_k^2 \lVert Q\rVert_{\rm sp} Z^T Q Z\right\},
	\end{align*}
	by the positive semi-definiteness of $Q$. By Jensen's inequality, this is further bounded by
	\begin{align*}
	\left[\mathbb{E}\exp\left\{t Z^T Q Z\right\}\right]^{2t \max_k\tilde b_k^2 \lVert Q\rVert_{\rm sp} },\quad 0<t< \frac{1}{2 \max_k \tilde b_k^2 \lVert Q\rVert_{\rm sp} }.
	\end{align*}
	Combining the last display and \eqref{c3eqn:lmmim0}, we obtain the inequality given in the lemma.
\end{proof}

\bibliographystyle{apalike}
\bibliography{ref}

\begin{thebibliography}{}

\bibitem[Atchad{\'e}, 2017]{atchade2017contraction}
Atchad{\'e}, Y.~A. (2017).
\newblock On the contraction properties of some high-dimensional
  quasi-posterior distributions.
\newblock {\em The Annals of Statistics}, 45(5):2248--2273.

\bibitem[Bach, 2010]{bach2010self}
Bach, F. (2010).
\newblock Self-concordant analysis for logistic regression.
\newblock {\em Electronic Journal of Statistics}, 4:384--414.

\bibitem[Bai et~al., 2020]{bai2019spike}
Bai, R., Moran, G.~E., Antonelli, J.~L., Chen, Y., and Boland, M.~R. (2020).
\newblock Spike-and-slab group lassos for grouped regression and sparse
  generalized additive models.
\newblock {\em Journal of the American Statistical Association}, to appear.

\bibitem[Belitser and Ghosal, 2020]{belitser2017empirical}
Belitser, E. and Ghosal, S. (2020).
\newblock Empirical {B}ayes oracle uncertainty quantification for regression.
\newblock {\em Annals of Statistics}, 48(6):3113--3137.

\bibitem[Blaz{\`e}re et~al., 2014]{blazere2014oracle}
Blaz{\`e}re, M., Loubes, J.-M., and Gamboa, F. (2014).
\newblock Oracle inequalities for a group lasso procedure applied to
  generalized linear models in high dimension.
\newblock {\em IEEE Transactions on Information Theory}, 60(4):2303--2318.

\bibitem[Castillo et~al., 2015]{castillo2015bayesian}
Castillo, I., Schmidt-Hieber, J., and van~der Vaart, A. (2015).
\newblock Bayesian linear regression with sparse priors.
\newblock {\em The Annals of Statistics}, 43(5):1986--2018.

\bibitem[De~la Pena and Gin{\'e}, 2012]{de2012decoupling}
De~la Pena, V. and Gin{\'e}, E. (2012).
\newblock {\em Decoupling: From Dependence to Independence}.
\newblock Springer Science \& Business Media.

\bibitem[Fang et~al., 1990]{fang1990symmetric}
Fang, K.-T., Kotz, S., and Ng, K.-W. (1990).
\newblock {\em Symmetric Multivariate and Related Distributions}.
\newblock Chapman and Hall, London.

\bibitem[Gao et~al., 2020]{gao2015general}
Gao, C., van~der Vaart, A.~W., and Zhou, H.~H. (2020).
\newblock A general framework for {B}ayes structured linear models.
\newblock {\em Annals of Statistics}, 48(5):2848--2878.

\bibitem[Ghosal et~al., 2000]{ghosal2000convergence}
Ghosal, S., Ghosh, J.~K., and van~der Vaart, A.~W. (2000).
\newblock Convergence rates of posterior distributions.
\newblock {\em The Annals of Statistics}, 28(2):500--531.

\bibitem[Ghosal and van~der Vaart, 2007]{ghosal2007convergence}
Ghosal, S. and van~der Vaart, A. (2007).
\newblock Convergence rates of posterior distributions for noniid observations.
\newblock {\em The Annals of Statistics}, 35(1):192--223.

\bibitem[Hoffman and Duncan, 1988]{hoffman1988multinomial}
Hoffman, S.~D. and Duncan, G.~J. (1988).
\newblock Multinomial and conditional logit discrete-choice models in
  demography.
\newblock {\em Demography}, 25(3):415--427.

\bibitem[Hsu et~al., 2012]{hsu2012tail}
Hsu, D., Kakade, S., and Zhang, T. (2012).
\newblock A tail inequality for quadratic forms of subgaussian random vectors.
\newblock {\em Electronic Communications in Probability}, 17:1--6.

\bibitem[Huang and Zhang, 2010]{huang2010benefit}
Huang, J. and Zhang, T. (2010).
\newblock The benefit of group sparsity.
\newblock {\em The Annals of Statistics}, 38(4):1978--2004.

\bibitem[Jeong and Ghosal, 2021a]{jeong2020posterior}
Jeong, S. and Ghosal, S. (2021a).
\newblock Posterior contraction in sparse generalized linear models.
\newblock {\em Biometrika}, 108(2):367--379.

\bibitem[Jeong and Ghosal, 2021b]{jeong2020unified}
Jeong, S. and Ghosal, S. (2021b).
\newblock Unified {B}ayesian theory of sparse linear regression with nuisance
  parameters.
\newblock {\em Electronic Journal of Statistics}, 15(1):3040--3111.

\bibitem[Jiang, 2007]{jiang2007bayesian}
Jiang, W. (2007).
\newblock Bayesian variable selection for high dimensional generalized linear
  models: convergence rates of the fitted densities.
\newblock {\em The Annals of Statistics}, 35(4):1487--1511.

\bibitem[Jin et~al., 2019]{jin2019short}
Jin, C., Netrapalli, P., Ge, R., Kakade, S.~M., and Jordan, M.~I. (2019).
\newblock A short note on concentration inequalities for random vectors with
  subgaussian norm.
\newblock {\em arXiv preprint arXiv:1902.03736}.

\bibitem[Lee and Cao, 2021]{lee2020bayesian}
Lee, K. and Cao, X. (2021).
\newblock Bayesian group selection in logistic regression with application to
  {MRI} data analysis.
\newblock {\em Biometrics}, 77(2):391--400.

\bibitem[Lounici et~al., 2011]{lounici2011oracle}
Lounici, K., Pontil, M., van~de Geer, S., and Tsybakov, A.~B. (2011).
\newblock Oracle inequalities and optimal inference under group sparsity.
\newblock {\em The Annals of Statistics}, 39(4):2164--2204.

\bibitem[Martin et~al., 2017]{martin2017empirical}
Martin, R., Mess, R., and Walker, S.~G. (2017).
\newblock Empirical {B}ayes posterior concentration in sparse high-dimensional
  linear models.
\newblock {\em Bernoulli}, 23(3):1822--1847.

\bibitem[McFadden, 1973]{mcfadden1973conditional}
McFadden, D. (1973).
\newblock Conditional logit analysis of qualitative choice behavior.
\newblock In Zarembka, P., editor, {\em Frontiers in Econometrics}, pages
  105--135. New York: Wiley.

\bibitem[Meier et~al., 2008]{meier2008group}
Meier, L., van~de Geer, S., and B{\"u}hlmann, P. (2008).
\newblock The group lasso for logistic regression.
\newblock {\em Journal of the Royal Statistical Society: Series B (Statistical
  Methodology)}, 70(1):53--71.

\bibitem[Nardi and Rinaldo, 2008]{nardi2008asymptotic}
Nardi, Y. and Rinaldo, A. (2008).
\newblock On the asymptotic properties of the group lasso estimator for linear
  models.
\newblock {\em Electronic Journal of Statistics}, 2:605--633.

\bibitem[Narisetty et~al., 2019]{narisetty2019skinny}
Narisetty, N.~N., Shen, J., and He, X. (2019).
\newblock Skinny {G}ibbs: {A} consistent and scalable {G}ibbs sampler for model
  selection.
\newblock {\em Journal of the American Statistical Association},
  114(527):1205--1217.

\bibitem[Ning et~al., 2020]{ning2018bayesian}
Ning, B., Jeong, S., and Ghosal, S. (2020).
\newblock Bayesian linear regression for multivariate responses under group
  sparsity.
\newblock {\em Bernoulli}, 26(3):2353--2382.

\bibitem[van~de Geer and Muro, 2014]{van2014higher}
van~de Geer, S. and Muro, A. (2014).
\newblock On higher order isotropy conditions and lower bounds for sparse
  quadratic forms.
\newblock {\em Electronic Journal of Statistics}, 8(2):3031--3061.

\bibitem[Vershynin, 2012]{vershynin2012introduction}
Vershynin, R. (2012).
\newblock Introduction to the non-asymptotic analysis of random matrices.
\newblock In Eldar, Y.~C. and Kutyniok, G., editors, {\em Compressed Sensing:
  Theory and Applications}, pages 210--268. Cambridge University Press,
  Cambridge-New York.

\bibitem[Vincent and Hansen, 2014]{vincent2014sparse}
Vincent, M. and Hansen, N.~R. (2014).
\newblock Sparse group lasso and high dimensional multinomial classification.
\newblock {\em Computational Statistics \& Data Analysis}, 71:771--786.

\bibitem[Watson, 1996]{watson1996spectral}
Watson, G.~S. (1996).
\newblock Spectral decomposition of the covariance matrix of a multinomial.
\newblock {\em Journal of the Royal Statistical Society: Series B
  (Methodological)}, 58(1):289--291.

\bibitem[Wei and Ghosal, 2020]{wei2019contraction}
Wei, R. and Ghosal, S. (2020).
\newblock Contraction properties of shrinkage priors in logistic regression.
\newblock {\em Journal of Statistical Planning and Inference}, 207:215--229.

\bibitem[Yuan and Lin, 2006]{yuan2006model}
Yuan, M. and Lin, Y. (2006).
\newblock Model selection and estimation in regression with grouped variables.
\newblock {\em Journal of the Royal Statistical Society: Series B (Statistical
  Methodology)}, 68(1):49--67.

\bibitem[Zajkowski, 2020]{zajkowski2020bounds}
Zajkowski, K. (2020).
\newblock Bounds on tail probabilities for quadratic forms in dependent
  sub-{G}aussian random variables.
\newblock {\em Statistics \& Probability Letters}, 167:108898.

\end{thebibliography}

\end{document}